\documentclass{amsart}   	
\oddsidemargin 0.200 true in
\evensidemargin 0.200 true in
\marginparwidth 1 true in
\topmargin -0.5 true in
\textheight 9 true in
\textwidth 6.0 true in

	
\usepackage{amssymb,amsthm}
\usepackage{amsthm}
\usepackage{graphicx}
\usepackage{enumitem}
\usepackage{tikz}
\usepackage{tikz-cd}
\usetikzlibrary{calc}
\usetikzlibrary{decorations.pathmorphing}
\usepackage{hyperref}
\usepackage{adjustbox}
\usepackage{mathtools}
\usepackage[all]{xy}

\pagestyle{plain}
\setcounter{MaxMatrixCols}{11}


\newtheorem{theorem}{Theorem}[section]
\newtheorem{proposition}[theorem]{Proposition}
\newtheorem{lemma}[theorem]{Lemma}
\newtheorem{corollary}[theorem]{Corollary}

\theoremstyle{definition}
\newtheorem{definition}[theorem]{Definition}
\newtheorem{construction}[theorem]{Construction}

\newtheorem{example}[theorem]{Example}

\newcommand{\qedno}{\nobreak\hfill\ensuremath{\square}}

\begin{document}
\title{Graphs with equal girth and circumference}
\author{Lewis Stanton*, Jeffrey Thompson}
\subjclass[2020]{Primary 05C38.}
\keywords{graph, cycle, girth, circumference}

\begin{abstract} 
We characterise the form of all simple, finite graphs for which the girth of the graph is equal to the circumference of the graph. We apply this to prove a bound on the number of edges in such a graph.
\end{abstract}

\maketitle 

\section{Introduction}
\label{sec:intro} 

Cycles are a natural object which arise from the definition of a graph, and the invariants related to cycles have been extensively studied. In particular, the girth and circumference of a graph (being the length of the shortest and longest cycle in a graph respectively) are invariants which arise naturally from the definition of a cycle. An interesting problem is to try to determine the structure of graphs for which the girth and circumference have specific values. In this paper, we characterise the graphs for which the girth and circumference are equal. In other words, graphs for which all of the cycles contained within it have the same length.

Similar questions to this have been studied before. For example, \cite{AH} considered graphs $G$ for which $G$ and its complement have the same girth. Graphs $G$ such that $G$ and its complement have circumference 3 or 4 were also characterised. The opposite extreme to having a graph $G$ with equal cycle lengths has also been studied. Erd\"os posed the problem of determining the maximum number $f(n)$ of edges in a simple graph with $n$ vertices in which any two cycles are of different lengths \cite{BM}[p. 247, Problem 11]. There are currently lower bounds \cite{S} and upper bounds \cite{BCFY} on $f(n)$, however this is still an open problem.

In this paper, we fully characterise graphs $G$ for which the girth and circumference are equal. First it is shown that $G$ must be planar. Now let $G$ be a finite, simple and planar graph containing at least one cycle. Two faces $F$ and $F'$ of $G$ are adjacent if the cycles bounding them share an edge, and $F$ and $F'$ are connected if there is a sequence of faces $F_1,\cdots,F_m$ where $F_1 = F$, $F_m = F'$ and $F_{k}$ is adjacent to $F_{k+1}$ for all $1 \leq k \leq m-1$. A \textit{strict face component} $G_F$ containing a face $F$ of $G$ consists of the vertices and edges of the cycles bounding the faces connected to $F$. The characterisation of graphs with equal girth and circumference is in terms of its strict face components, as in Corollary ~\ref{generalcharacterisation}. This leads to an alternative characterisation in Theorem ~\ref{blockgeneralcharacterisation} in terms of blocks, which can be used to give an algorithm for determining if a given graph has equal girth and circumference. Finally in Section ~\ref{sec:bound}, we give an upper bound on the number of edges in a finite, simple, connected and planar graph $G$ on $n$ vertices with equal girth and circumference $r$. In particular, if the value of $r$ is known, it is proved in Theorem ~\ref{upperboundwithr} that if $r$ is even, then $|E(G)| \leq n-1+\left\lfloor\frac{n-\frac{r}{2}-1}{\frac{r}{2}-1}\right\rfloor$, or if $r$ is odd, then $|E(G)| \leq n-1+\left\lfloor\frac{n-1}{r-1}\right\rfloor$. If the value of $r$ is unknown, it is proved in Corollary ~\ref{upperboundwithoutr} that $|E(G)|\leq 2n-4$. This has applications in showing whether a given graph must have two cycles of different lengths, solely in terms of the number of vertices and edges.

\section{Basic Definitions}
\label{sec:definitions}

In this section, we define the basic notions in graph theory which will be required in this paper. We will use the definitions from Gross, Yellen and Zhang \cite{GYZ}. We restrict our attention to simple, finite and connected graphs. There are two invariants of graphs related to cycles known as the girth and the circumference. These invariants are well defined when the graph $G$ contains a cycle. Connected graphs which do not contain cycles are known as \textit{trees}. The \textit{girth} of $G$, denoted girth$(G)$, is the length of the shortest cycle in $G$. The \textit{circumference} of $G$, denoted circum$(G)$, is the length of the longest cycle in $G$.

Let $H$ be a subgraph of a graph $G$. The values of the girth and circumference of $H$ and $G$ can be related through an inequality. A graph invariant $\mu$ is \textit{monotone} if $\mu(H) \leq \mu(G)$ for all simple, connected and finite subgraphs $H$ of a simple, connected and finite graph $G$. A graph invariant $\mu$ is \textit{anti-monotone} if $\mu(G) \leq \mu(H)$ for all simple, connected and finite subgraphs $H$ of a simple, connected and finite graph $G$. It follows from the definitions of girth and circumference that the girth of a graph is anti-monotone and the circumference of a graph is monotone.

It will be shown that a necessary condition for a graph with girth$(G) = \text{circum}(G)$ is for a graph to be planar. A graph $G$ is \textit{planar} if it can be embedded in $\mathbb{R}^2$. An embedding of a planar graph $G$ into the plane is known as a \textit{plane representation} of $G$. A \textit{face} of a given plane representation of a graph $G$ is a component of the complement of the plane representation in the plane. For a given plane embedding of a graph $G$, we will need to differentiate between two types of faces. An \textit{internal face} of a graph $G$ is a bounded component of the complement of the plane representation in the plane. The \textit{external face} of a graph $G$ is the unbounded component of the complement of the plane representation in the plane. The structure of the internal faces of $G$ will turn out to be closely related to subgraphs known as blocks. A \textit{cut vertex} $v$ of $G$ is a vertex $v$ such that removing it disconnects $G$. A \textit{block} in a graph $G$ is a maximally connected subgraph that does not contain a cut vertex. If $G$ is connected and contains no cut vertices, then $G$ is called a block graph.

We will also require the notion of homeomorphic graphs. This involves an operation on graphs known as subdivision of an edge. For any edge $e = x \cdot y \in E(G)$ in a graph $G$, we can form a new graph $G'$ by introducing a new vertex $z$ in the interior of $e$. This forms two new edges in $G'$, namely $e_1 = x \cdot z$ and $e_2 = z \cdot y$. A graph $G'$ is a \textit{subdivision} of a graph $G$ if we can obtain $G'$ by subdividing the edges of $G$. 

Before defining a homeomorphism, we define the notion of a homomorphism between graphs. Let $G$ and $H$ be graphs. A \textit{homomorphism} between $G$ and $H$ is a function $\phi:V(G) \rightarrow V(H)$ such that if $v$ is adjacent to $w$ in $G$, $\phi(v)$ is adjacent to $\phi(w)$ in $H$. A bijective homomorphism is called an \textit{isomorphism}. If there exists an isomorphism between two graphs $G$ and $H$, we say that $G$ and $H$ are \textit{isomorphic} and we denote this by $G \cong H$. Two graphs are homeomorphic if there exist subdivisions $G'$ and $H'$ of $G$ and $H$ respectively such that $G' \cong H'$.

Finally, we define special types of graphs which will be used throughout the paper. The \textit{complete graph} on $n$ vertices, denoted $K_n$, is the graph such that every pair of distinct vertices are adjacent. There is a variant of the complete graph for bipartite graphs. For $m \geq 1$ and $n \geq 1$, the \textit{complete bipartite graph}, denoted $K_{m,n}$, is the graph whose vertex set is the union of two sets $V(K_{m,n}) = A \cup B$, where $A$ and $B$ are disjoint sets with $|A| = m$ and $|B| = n$. The edge set $E(K_{n,m})$ consists of all edges of the form $e = a \cdot b$ where $a \in A$ and $b \in B$. The \textit{path graph} $P_n$ is a tree on $n+1$ vertices with 2 vertices of degree 1 and $n-1$ vertices of degree 2. Finally, the \textit{cycle graph} on $n$ vertices, denoted $C_n$ is the graph consisting of a single cycle of length $n$.

\section{Characterising graphs with equal girth and circumference}
\label{sec:char}

\subsection{Non-Planar Graphs}
\label{subsec:Non-plane}

In this section, it is proven that non-planar graphs $G$ have girth$(G) \neq \text{circum}(G)$. From this point onwards, any references to faces will be taken to be internal faces unless otherwise specified. To do this, a well known characterisation of planar graphs will be required known as Kuratowski's theorem \cite{K}.

\begin{theorem}
\label{kuratowski}
A simple, connected and finite graph $G$ is non-planar if and only if $G$ contains a subgraph that is homeomorphic to either $K_5$ or to $K_{3,3}$.
\qedno
\end{theorem}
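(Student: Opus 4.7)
The plan is to prove both directions of the equivalence. The easier direction, that containing a subdivision of $K_5$ or $K_{3,3}$ forces non-planarity, has two steps. First, show $K_5$ and $K_{3,3}$ are themselves non-planar via Euler's formula: for any simple connected planar graph with $v \geq 3$ vertices one has $e \leq 3v-6$, which fails for $K_5$ (where $v=5$, $e=10$), and for a bipartite simple planar graph with $v \geq 3$ one has $e \leq 2v-4$, which fails for $K_{3,3}$ (where $v=6$, $e=9$). Second, observe that subdividing edges preserves both planarity and non-planarity, as does the inverse operation of suppressing degree-2 vertices; hence any graph containing a subgraph homeomorphic to $K_5$ or $K_{3,3}$ is itself non-planar.

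For the converse, I would argue by contradiction using an edge-minimal counterexample. Let $G$ be non-planar with $|E(G)|$ minimal among graphs containing no subdivision of $K_5$ or $K_{3,3}$. The first structural step is to prove $G$ is 3-connected. If $G$ had a cut vertex, each block would be smaller, hence planar by minimality, and one could glue plane embeddings of the blocks at the cut vertex to produce a plane embedding of $G$. A similar but more delicate argument handles a 2-vertex cut $\{u,v\}$: each side together with a virtual edge $uv$ is smaller than $G$ and still avoids Kuratowski subdivisions, hence is planar, and the two embeddings can be glued along $uv$ to planarly embed $G$, a contradiction.

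Once $G$ is 3-connected, the core task is to extract a subdivision of $K_5$ or $K_{3,3}$. The standard approach is induction on $|V(G)|$: by a classical lemma of Tutte, any 3-connected graph on at least five vertices has an edge $e$ whose contraction $G/e$ is again 3-connected. If $G/e$ is non-planar, the inductive hypothesis supplies a subdivision of $K_5$ or $K_{3,3}$ in $G/e$, which can be lifted back to $G$ by re-inserting $e$. If $G/e$ is planar, then by Whitney's theorem it has an essentially unique plane embedding, and by analysing how the neighbours of the two endpoints of $e$ are distributed around the face of $G/e$ containing the contracted vertex, one obtains either a $K_5$ subdivision (when three pairs of neighbours cross) or a $K_{3,3}$ subdivision (when the neighbours interleave suitably around the face boundary).

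The main obstacle is the final case analysis in the 3-connected step: one must exhaustively verify that every possible cyclic distribution of the neighbours of the endpoints of the contracted edge produces either a $K_5$ or a $K_{3,3}$ pattern, using 3-connectivity to supply the internally disjoint paths that realise the required topological subgraph. Everything else, namely the Euler-formula bounds in the easy direction, the reduction to 3-connected graphs, and the inductive step when $G/e$ is non-planar, is essentially routine once the right structural setup is in place.
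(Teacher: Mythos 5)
The paper does not prove this statement at all: it is Kuratowski's theorem, imported as a known result with a citation to \cite{K} and used as a black box in the proof of Theorem~\ref{nonplanarnotequal}. So there is no ``paper's proof'' to compare against, and any correct argument you give is necessarily a different route. Your outline is a faithful sketch of the standard modern proof (essentially Thomassen's): the easy direction via the Euler-formula bounds $e \leq 3v-6$ and $e \leq 2v-4$ together with the invariance of planarity under subdivision is fine, and the hard direction via an edge-minimal counterexample, reduction to $3$-connectivity by gluing block or $2$-cut pieces along a virtual edge, Tutte's contractible-edge lemma, and the final case analysis on how the neighbours of the contracted edge's endpoints sit on the face boundary is the right architecture. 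Two details you gloss over deserve a flag if this were to be written out in full: when you ``lift back'' a Kuratowski subdivision from $G/e$ to $G$, a degree-$4$ branch vertex of a $K_5$ subdivision at the contracted vertex can split so that the lifted subgraph is a $K_{3,3}$ subdivision rather than a $K_5$ one, so the lifting lemma must allow the type to change; and the $2$-cut gluing requires arranging the virtual edge $uv$ on the outer face of each piece's embedding. Neither is a gap in the approach, just standard bookkeeping. Given that the paper treats this as a classical citation, reproducing a full proof is not expected, but your sketch is correct in outline.
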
 We will also require the following lemma.

\begin{lemma}
\label{subgraphequalG&C}
Let $G$ be a simple, connected and finite graph with subgraph $H$ which contains a cycle. If girth$(G) = \text{circum}(G)$, then girth$(G) = \text{girth}(H) = \text{circum}(H)$.
\end{lemma}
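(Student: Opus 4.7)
The plan is to deduce the lemma purely from the already-stated monotonicity properties of the two invariants, by sandwiching girth$(H)$ and circum$(H)$ between girth$(G)$ and circum$(G)$.

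First I would invoke the observation from Section \ref{sec:definitions} that girth is anti-monotone and circumference is monotone. Applied to the subgraph $H \subseteq G$ (assumed simple, connected and finite, inheriting these properties from $G$), these give
\[
\text{girth}(G) \leq \text{girth}(H) \quad \text{and} \quad \text{circum}(H) \leq \text{circum}(G).
\]
Since $H$ contains a cycle, both girth$(H)$ and circum$(H)$ are well defined, and by definition the shortest cycle in $H$ is at most as long as the longest, so girth$(H) \leq \text{circum}(H)$.

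Combining the three inequalities with the hypothesis girth$(G) = \text{circum}(G)$ yields
\[
\text{girth}(G) \leq \text{girth}(H) \leq \text{circum}(H) \leq \text{circum}(G) = \text{girth}(G),
\]
which forces all four quantities to coincide, giving girth$(G) = \text{girth}(H) = \text{circum}(H)$ as required.

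The only point requiring any care, and the closest thing to an obstacle, is verifying that the monotonicity statements apply, i.e.\ that $H$ should be regarded as a simple, connected, finite subgraph; since $G$ is simple and finite and $H$ contains a cycle, we may work with a connected subgraph of $H$ containing that cycle if needed. Beyond this bookkeeping, the argument is a direct chain of inequalities and no further input is needed.
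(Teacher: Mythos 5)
Your proof is correct and is essentially the paper's argument in different clothing: the paper's own proof simply observes that every cycle of $H$ is a cycle of $G$ and hence has the common length girth$(G)$, which is exactly the content of the three inequalities you chain together. Your caveat about connectivity of $H$ is sensible but not actually needed, since the containment of cycle sets (and hence the sandwich girth$(G) \leq \text{girth}(H) \leq \text{circum}(H) \leq \text{circum}(G)$) holds for an arbitrary subgraph containing a cycle.
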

\begin{proof}
Let $\mathcal{C}$ be the set of cycles in $G$ and $\mathcal{C}' \subseteq \mathcal{C}$ be the set of cycles in $H$. Note that by assumption, $\mathcal{C}'$ is non-empty. Each cycle $C \in \mathcal{C}$ must all be of the same length $l = \text{girth}(G)$ since girth$(G) = \text{circum}(G)$. Therefore, all the cycles in $H$ must have length $l = \text{girth}(G)$ and so girth$(G) = \text{girth}(H) = \text{circum}(H)$.
\end{proof}

Using Theorem ~\ref{kuratowski} and Lemma ~\ref{subgraphequalG&C}, we can prove the desired result. 

\begin{theorem}
\label{nonplanarnotequal}
Let $G$ be a simple, connected and finite non-planar graph. Then girth$(G) \neq \text{circum}(G)$.
\end{theorem}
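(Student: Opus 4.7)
The plan is to combine Kuratowski's theorem with Lemma~\ref{subgraphequalG&C} and argue by contradiction. Assuming $\text{girth}(G) = \text{circum}(G) = L$, Theorem~\ref{kuratowski} produces a connected subgraph $H$ of $G$ homeomorphic to $K_5$ or $K_{3,3}$; since $H$ contains a cycle, Lemma~\ref{subgraphequalG&C} forces every cycle of $H$ to have length $L$. The task then reduces to showing that no subdivision of $K_5$ and no subdivision of $K_{3,3}$ can have all cycles of a single common length.

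To handle each case, I would view $H$ as a subdivision of $K \in \{K_5, K_{3,3}\}$ by assigning to every edge $e$ of $K$ the positive integer $\ell_e \geq 1$ equal to the length of the corresponding subdivided path in $H$; a cycle of $K$ then lifts to a cycle of $H$ whose length is $\sum_e \ell_e$ over the edges of that cycle. For $K_5$, the key observation is that any edge $uv$ lies in two triangles $uvw$ and $uvx$ whose symmetric difference is the $4$-cycle $w\text{-}u\text{-}x\text{-}v\text{-}w$; adding the two triangle-length equations and subtracting the $4$-cycle equation gives $2\ell_{uv} = L$. Since $uv$ was arbitrary, every edge length is $L/2$, but then any triangle lifts to a cycle of length $3L/2 = L$, forcing $L = 0$, which contradicts $\ell_e \geq 1$.

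For $K_{3,3}$ I would run exactly the same symmetric-difference trick, using two $4$-cycles through a common edge $a_i b_j$ whose symmetric difference is a Hamiltonian $6$-cycle; by the edge-transitivity of $K_{3,3}$ this again forces $\ell_e = L/2$ for every edge, and then any $4$-cycle of $K_{3,3}$ lifts to a cycle of length $2L = L$, giving the same contradiction.

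The hard part will be the bookkeeping: checking in each case that the two chosen cycles share exactly one edge (so that their symmetric difference is a single cycle rather than a disjoint union of cycles) and that every edge of $K$ can play the role of the shared edge in such a pair. Both facts follow easily from the edge-transitivity of $K_5$ and of $K_{3,3}$, so they are essentially the only non-formal steps; once in place, the remainder is a one-line linear cancellation.
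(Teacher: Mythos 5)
Your proposal is correct, and its skeleton (Kuratowski plus Lemma~\ref{subgraphequalG&C}, then ruling out subdivisions of $K_5$ and $K_{3,3}$ by linear relations among cycle lengths) matches the paper's. Where you genuinely diverge is in how the linear system is resolved. The paper writes down all ten triangle equations for $K_5$ (resp.\ nine $4$-cycle equations for $K_{3,3}$), asserts that the associated matrix has a one-dimensional null space, and leaves that rank computation to the reader; it then gets the contradiction by comparing the subdivided $3$-cycle length $3+3d$ with the subdivided $5$-cycle length $5+5d$ (resp.\ $4+4d$ with $6+6d$). Your symmetric-difference trick replaces the unverified rank claim with an explicit two-line elimination: adding the equations for the two triangles through $uv$ and subtracting the equation for the $4$-cycle $w\text{-}u\text{-}x\text{-}v\text{-}w$ isolates $2\ell_{uv}=L$ directly, and edge-transitivity propagates this to every edge. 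Your closing contradiction ($3L/2=L$ forces $L=0$ against $\ell_e\geq 1$) is also slightly cleaner than the paper's, since it does not even need the $5$-cycle or $6$-cycle of the Kuratowski graph, only the $4$-cycle (resp.\ $6$-cycle) used in the elimination. The one point you flag as ``bookkeeping'' --- that the two chosen cycles share exactly one edge and that their symmetric difference is a single cycle --- you have in fact already verified by exhibiting the $4$-cycle and the Hamiltonian $6$-cycle explicitly, so there is no gap; note only that this is a direct check in each case rather than a consequence of edge-transitivity, which you should phrase accordingly. On balance your version is the more self-contained of the two, at the cost of being marginally less systematic than writing out the full system.
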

\begin{proof}
It suffices to find a subgraph $H$ with girth$(H) \neq \text{circum}(H)$. This is because if $H$ is a subgraph with girth$(H) \neq \text{circum}(H)$, then $H$ contains two cycles which have two distinct lengths. These cycles will also be contained in $G$ and so girth$(G) \neq \text{circum}(G)$. By Theorem ~\ref{kuratowski}, there exists a subgraph $H$ in $G$ which is homeomorphic to either $K_5$ or to $K_{3,3}$. Therefore, it suffices to prove that any subgraph $H$ homeomorphic to $K_5$ or to $K_{3,3}$ has girth$(H) \neq \text{circum}(H)$.

First, consider the case where $H$ is homeomorphic to $K_5$ and suppose $H$ satisfies girth$(H) = \text{circum}(H)$. This means $H$ can be obtained by subdividing the edges of $K_5$. Let $e_1,\cdots,e_{10}$ be the edges of $K_5$ and denote by $d_i$ the number of times the edge $e_i$ is subdivided to obtain $H$. Let $C_1,\cdots,C_{10}$ be the 3-cycles in $K_5$. 

To ensure that girth$(H) = \text{circum}(H)$, the length of each 3-cycle after subdividing must equal a fixed number $N \geq 3$. In particular, this means that for each 3-cycle $C_k$ where $k \in \{1,\cdots,10\}$, $\sum_{e_i \in C_k} d_i = N$. Each edge $e_i$ of $K_5$ is contained in at least one cycle of $K_5$ and so this gives a system of 10 equations with 11 unknowns. As the reader can easily check, the dimension of the null space of the matrix associated with this system of equations is 1. Since each cycle $C_k$ contains three edges, letting $d_i = d$ for any $d \in \mathbb{N}_0$ is the only solution to this system of equations. Hence for girth$(H)$ to be equal to circum$(H)$, each edge must be subdivided $d$ times. However, $K_5$ contains a 5-cycle. Subdividing the edge of a 5-cycle $d$ times gives a cycle of length $5+5d$, and so circum$(H) = 5+5d$. Subdividing the edge of a 3-cycle $d$ times gives a cycle of length $3+3d$, and so girth$(H) = 3+3d$. These are not equal for $d \geq 0$. Hence, girth$(H) \neq \text{circum}(H)$. 

Now, consider the case where $H$ is homeomorphic to $K_{3,3}$ and suppose $H$ satisfies girth$(H) = \text{circum}(H)$. This means $H$ can be obtained by subdividing the edges of $K_{3,3}$. Let $e_1,\cdots,e_{9}$ be the edges of $K_{3,3}$ and denote by $d_i$ the number of times the edge $e_i$ must be subdivided to obtain $H$. Let $C_1,\cdots,C_{9}$ be the 4-cycles in $K_5$. 

To ensure that girth$(H) = \text{circum}(H)$, the length of each 4-cycle after subdividing must equal a fixed number $N \geq 4$. In particular, this means that for each 4-cycle $C_k$ where $k \in \{1,\cdots,9\}$, $\sum_{e_i \in C_k} d_i = N$. Each edge $e_i$ of $K_{3,3}$ is contained in at least one cycle of $K_{3,3}$ and so this gives a system of 9 equations with 10 unknowns. As the reader can easily check, the dimension of the null space of the matrix associated with this system of equations is 1. Since each cycle $C_k$ contains four edges, letting $d_i = d$ for any $d \in \mathbb{N}_0$ is the only solution to this system of equations. Therefore for girth$(G)$ to be equal to circum$(H)$, each edge must be subdivided $d$ times. However, $K_{3,3}$ contains a 6-cycle. Subdividing the edge of a 6-cycle $d$ times gives a cycle of length $6+6d$, and so circum$(H) = 6+6d$. Subdividing the edge of a 4-cycle $d$ times gives a cycle of length $4+4d$, and so girth$(H) = 4+4d$. These are not equal for $d \geq 0$. Hence girth$(H) \neq \text{circum}(H)$. 

\end{proof}

\subsection{Planar Graphs}
\label{subsec:Plane}

Theorem ~\ref{nonplanarnotequal} implies that any graph with girth$(G) = \text{circum}(G)$ must be planar. To characterise planar graphs with girth$(G) = \text{circum}(G)$, we first define subgraphs of $G$ which we will consider as building blocks for $G$. This involves introducing the notion of a strict face-connected graph.

\begin{definition}
Suppose $G$ is a finite, simple and planar graph. Let $F$ and $F'$ be two faces of $G$ and let $C$ and $C'$ be the cycles which bound them. The faces $F$ and $F'$ are \textit{adjacent} if $C$ and $C'$ share at least one common edge.
\end{definition}

\begin{definition}
Let $G$ be a finite, simple, planar graph containing at least one cycle. Two faces $F$ and $F'$ in $G$ are connected if there is a sequence of faces $F_1,\cdots,F_m$ where $F_1 = F$, $F_m = F'$ and $F_{k}$ is adjacent to $F_{k+1}$ for all $1 \leq k \leq m-1$. The graph $G$ is \textit{face-connected} if  any two faces $F_i$ and $F_j$ of $G$ are connected. 
\end{definition}

\begin{definition}
A finite, simple and planar graph $G$ containing a cycle is \textit{strict face-connected} if it is face-connected and the vertices and edges of $G$ are the union of the vertices and edges of the cycles bounding the internal faces.
\end{definition}

The following figure gives an example of a graph that is strict face-connected ($G_1$), a graph that is face-connected but not strict face-connected ($G_2$) and a graph that is not face-connected ($G_3$).

\begin{equation}\label{eq:faceconnectedex}{\begin{tikzpicture}
     \draw (-3.0,-0.1) -- (-2.5,-0.1) -- (-2.5,-0.9) -- (-3,-0.9) -- (-3.0,-0.1); 
		 \draw (-2.5,-0.1) -- (-2,-0.1) -- (-2,-0.9) -- (-2.5,-0.9) -- (-2.5,-0.1);
		 \draw (-2,-0.1) -- (-1.5,-0.1) -- (-1.5,-0.9) -- (-2,-0.9) -- (-2,-0.1);
      
     \draw [fill] (-3.0,-0.1) circle [radius=0.03];
		 \draw [fill] (-2.5,-0.1) circle [radius=0.03];
		 \draw [fill] (-2.5,-0.9) circle [radius=0.03];
		 \draw [fill] (-2,-0.9) circle [radius=0.03];
		 \draw [fill] (-2,-0.1) circle [radius=0.03];
		 \draw [fill] (-3,-0.9) circle [radius=0.03];
		 \draw [fill] (-1.5,-0.1) circle [radius=0.03];
		 \draw [fill] (-1.5,-0.9) circle [radius=0.03];
		 \node at (-2.25,-1.5) {$G_1$};

     \draw (0.0,-0.1) -- (0.5,-0.1) -- (0.5,-0.9) -- (0.0,-0.9) -- (0.0,-0.1); 
		 \draw (0.5,-0.1) -- (1.0,-0.1) -- (1.0,-0.9) -- (0.5,-0.9) -- (0.5,-0.1);
		 \draw (1.0,-0.1) -- (1.5,-0.1) -- (1.5,-0.9) -- (1.0,-0.9) -- (1.0,-0.1);
		 \draw (0.5,-0.1) -- (0.5,0.4);
      
     \draw [fill] (0.0,-0.1) circle [radius=0.03];
		 \draw [fill] (0.5,-0.1) circle [radius=0.03];
		 \draw [fill] (0.5,-0.9) circle [radius=0.03];
		 \draw [fill] (0.0,-0.9) circle [radius=0.03];
		 \draw [fill] (1.0,-0.1) circle [radius=0.03];
		 \draw [fill] (1.0,-0.9) circle [radius=0.03];
		 \draw [fill] (1.5,-0.1) circle [radius=0.03];
		 \draw [fill] (1.5,-0.9) circle [radius=0.03];
		 \draw [fill] (0.5,0.4) circle [radius=0.03];
		 \node at (0.75,-1.5) {$G_2$};
		
		     \draw (3,-0.1) -- (3.5,-0.1) -- (3.5,-0.9) -- (3,-0.9) -- (3,-0.1); 
		 \draw (3.5,-0.1) -- (4,-0.1) -- (4,-0.9) -- (3.5,-0.9) -- (3.5,-0.1);
		 \draw (4,-0.1) -- (4.5,-0.1) -- (4.5,-0.9) -- (4,-0.9) -- (4,-0.1);
		 \draw (4.5,-0.1) -- (5,-0.1);
		 \draw (5,-0.1) -- (5.5,-0.1) -- (5.5,-0.9) -- (5.5,-0.9) -- (5,-0.1);
      
     \draw [fill] (3,-0.1) circle [radius=0.03];
		 \draw [fill] (3.5,-0.1) circle [radius=0.03];
		 \draw [fill] (4,-0.9) circle [radius=0.03];
		 \draw [fill] (3.5,-0.9) circle [radius=0.03];
		 \draw [fill] (4,-0.1) circle [radius=0.03];
		 \draw [fill] (4,-0.9) circle [radius=0.03];
		 \draw [fill] (4.5,-0.1) circle [radius=0.03];
		 \draw [fill] (4.5,-0.9) circle [radius=0.03];
		 \draw [fill] (5,-0.1) circle [radius=0.03];
		 \draw [fill] (5.5,-0.9) circle [radius=0.03];
		 \draw [fill] (5.5,-0.1) circle [radius=0.03];
		 \node at (4.25,-1.5) {$G_3$.};
		
  \end{tikzpicture}}\end{equation}

\begin{definition}
Let $G$ be a finite, simple and planar graph containing at least one cycle. Let $F$ be a face of $G$. A \textit{strict face component} $G_F$ containing $F$ consists of the vertices and edges of the cycles bounding the faces connected to $F$.
\end{definition}

In (\ref{eq:faceconnectedex}), since $G_1$ and $G_2$ are face-connected, they have one strict face component although note that the strict face component of $G_2$ is not the whole graph. The graph $G_3$ has two strict face components, namely the sequence of 4-cycles on the left and the one 3-cycle on the right.

The notion of a strict face component is closely related to blocks. To see this, we first prove how strict face components are related. 

\begin{lemma}
\label{facecomponentintersection}
Let $G$ be a simple, finite and planar graph and let $\mathcal{F}_1,\cdots,\mathcal{F}_k$ be the strict face components of $G$. Then for any two distinct strict face components $\mathcal{F}_i$ and $\mathcal{F}_j$, $E(\mathcal{F}_i) \cap E(\mathcal{F}_j) = \emptyset$. Moreover, $V(\mathcal{F}_i) \cap V(\mathcal{F}_j) = \{v\}$, where $v$ is a cut vertex of $G$ or $V(\mathcal{F}_i) \cap V(\mathcal{F}_j) = \emptyset$.
\end{lemma}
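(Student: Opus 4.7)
The plan is to treat the edge-disjointness and vertex claims separately.

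For edge-disjointness, I would argue directly. In a plane embedding each edge borders at most two faces. If $e \in E(\mathcal{F}_i) \cap E(\mathcal{F}_j)$, then $e$ lies on the bounding cycle of some internal face $F$ associated to $\mathcal{F}_i$ and of some internal face $F'$ associated to $\mathcal{F}_j$. Either $F = F'$, which puts this face in the face-equivalence class of both components and hence forces $\mathcal{F}_i = \mathcal{F}_j$, or $F \neq F'$, in which case $F$ and $F'$ share the edge $e$ and are therefore adjacent, again putting them in the same strict face component. Either conclusion contradicts $i \neq j$.

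For the vertex claim, I plan to pass through the block decomposition of $G$ and exploit the standard fact that distinct blocks share at most one vertex, which must be a cut vertex of $G$. The first step is to show that each strict face component $\mathcal{F}_i$ is contained in a unique block $B_i$. Every cycle in $G$ is $2$-connected and so lies in a single block, so each bounding cycle of a face contributing to $\mathcal{F}_i$ lies in some block. If $F$ and $F'$ are adjacent, they share an edge, and that edge lies in exactly one block, so the two bounding cycles lie in a common block. By face-connectedness of the faces defining $\mathcal{F}_i$, chaining this through adjacencies forces every bounding cycle of $\mathcal{F}_i$ to lie in one block $B_i$, and hence $\mathcal{F}_i \subseteq B_i$.

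The next step is that distinct strict face components must lie in distinct blocks. Suppose for contradiction $\mathcal{F}_i, \mathcal{F}_j \subseteq B$ for the same block $B$. Since $B$ is $2$-connected, in its inherited plane embedding the union of the closed internal faces is a topological disk; a generic curve between interior points of two such faces crosses only edges of $B$, and each such crossing is a face-adjacency inside $B$. Thus the internal faces of $B$ are all face-connected. Combined with the previous step, which implies that any adjacency chain starting from a face with boundary in $B$ stays inside $B$, this forces $\mathcal{F}_i = \mathcal{F}_j$, contradicting $i \neq j$. Hence $B_i \neq B_j$, and $V(\mathcal{F}_i) \cap V(\mathcal{F}_j) \subseteq V(B_i) \cap V(B_j)$ is either empty or a single cut vertex of $G$, which is the desired conclusion.

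The main obstacle is formalising the topological step that in a $2$-connected plane graph the internal faces are face-connected. This can be handled either by a direct path-in-disk argument, using that the outer boundary of a $2$-connected plane graph is a simple cycle enclosing all internal faces, or by invoking the standard result that the dual of a $2$-connected plane graph restricted to its internal faces is connected.
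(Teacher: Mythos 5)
Your proof is correct in outline but takes a genuinely different route from the paper for the vertex claim (your edge-disjointness argument is essentially identical to the paper's). The paper argues directly: assuming two components share two vertices, it builds two edge-disjoint paths, one in each component, extracts a cycle with edges bordering faces of both components, and asserts this forces the components to coincide; it then handles the single-shared-vertex case by a separate argument showing that if the shared vertex were not a cut vertex one could again manufacture such a straddling cycle. You instead route everything through the block decomposition: each strict face component lies in a single block, distinct components lie in distinct blocks because the internal faces of a $2$-connected plane graph are mutually face-connected, and then the conclusion (at most one shared vertex, necessarily a cut vertex) falls out of standard block theory for free, whereas the paper must argue the cut-vertex property by hand. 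What your approach buys is a clean isolation of the one genuinely topological ingredient (connectedness of the weak dual of a $2$-connected plane graph) and a transparent link to the paper's later Proposition identifying strict face components with blocks; what the paper's approach buys is self-containedness, avoiding any appeal to block theory, though at the cost of burying an equivalent topological assertion (that a cycle with edges adjacent to faces of two components forces the components to be equal) without proof. One point in your argument that deserves care: your curve between two faces lives in the plane embedding of all of $G$, not just of $B$, so you must first perturb it to avoid the subgraphs of $G$ attached inside internal faces of $B$ at cut vertices (possible since each such subgraph meets $B$ in a single vertex) before you can claim it crosses only edges of $B$; you flag the disk argument as the main obstacle, and this is the precise form of it.
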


\begin{proof}
Let $\mathcal{F}_i$ and $\mathcal{F}_j$ be two distinct strict face components and suppose $E(\mathcal{F}_i) \cap E(\mathcal{F}_j) \neq \emptyset$. This implies there exist faces $F_i$ in $\mathcal{F}_i$ and $F_j$ in $\mathcal{F}_j$ which are adjacent, and so $\mathcal{F}_i = \mathcal{F}_j$.

Now suppose $V(\mathcal{F}_i) \cap V(\mathcal{F}_j) = \{v_1,\cdots,v_l\}$ for $l \geq 2$. Observe that $\mathcal{F}_i$ is connected as if not, for a face $F$ in a connected component $C$, any vertex not contained in $C$ can not be in the strict face component containing $F$. Therefore, there exists a path $P_i$ contained in $\mathcal{F}_i$ between $v_1$ and $v_2$. Similarly, there exists a path $P_j$ in $\mathcal{F}_j$ between $v_1$ and $v_2$. Since the edge sets of $\mathcal{F}_i$ and $\mathcal{F}_j$ are disjoint, the edge sets of these paths must be disjoint. This implies that $v_1$ and $v_2$ are part of a cycle $C$ which has edges contained in both $\mathcal{F}_i$ and $\mathcal{F}_j$. Therefore, $C$ has an edge adjacent to a face in $\mathcal{F}_i$ and an edge adjacent to a face in $\mathcal{F}_j$, and so it follows that $\mathcal{F}_i = \mathcal{F}_j$. Hence, either $V(\mathcal{F}_i) \cap V(\mathcal{F}_j) = \{v\}$ or $V(\mathcal{F}_i) \cap V(\mathcal{F}_j) = \emptyset$.

Finally, suppose $V(\mathcal{F}_i) \cap V(\mathcal{F}_j) = \{v\}$ and $v$ is not a cut vertex. Since $\mathcal{F}_i$ and $\mathcal{F}_j$ are connected, there exist vertices $v_i \in \mathcal{F}_i$ and $v_j \in \mathcal{F}_j$ which are adjacent to $v$. Since $v$ is not a cut vertex, removing $v$ from $G$ results in a graph $G'$ which is still connected. Therefore there exists a path $P$ between $v_i$ and $v_j$ contained in $G'$. Adding $v$ back in gives a cycle $C$ containing $v$ in $G$ which contains the edges $v \cdot v_i \in E(\mathcal{F}_i)$ and $v \cdot v_j \in E(\mathcal{F}_j)$. Hence, $C$ is a cycle containing an edge in both $\mathcal{F}_i$ and $\mathcal{F}_j$, which is a contradiction.
\end{proof}

In the following, we will require the notion of the wedge sum of graphs.

\begin{definition}
Let $G$ and $H$ be simple graphs and let $v_G \in V(G)$ and $v_H \in V(H)$ be chosen vertices, known as \textit{base vertices}. The wedge sum $G \vee H$ of $G$ and $H$ is the graph formed from the disjoint union of $G$ and $H$ by identifying the base vertices. Define the identified vertex in $G \vee H$ as the base vertex of the wedge sum.
\end{definition}

Observe that the base vertex of $G \vee H$ is a cut vertex.

\begin{proposition}
\label{strictfaceblock}
A planar graph $G$ containing a cycle is strict face-connected iff $G$ is a planar block graph.
\end{proposition}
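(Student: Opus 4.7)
The plan is to prove the two directions separately, the forward direction via a wedge-sum obstruction and the converse via standard facts about $2$-connected plane embeddings.

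For the forward direction, I would argue by contradiction: suppose $G$ is strict face-connected but has a cut vertex $v$. Then $G$ decomposes as a wedge sum $G = G_1 \vee G_2$ with base vertex $v$, where each summand contains at least one edge. Any cycle of $G$ lies entirely in one summand, since if it met both sides, deleting $v$ from the cycle would give a path joining different components of $G \setminus v$. Consequently the bounding cycle of every face lies in one of $G_1$ or $G_2$, and the strict condition, which demands that every edge of $G$ (hence of each $G_i$) belong to some bounding cycle, forces each summand to contain at least one face. But two faces whose bounding cycles lie in different summands share at most the vertex $v$, hence no edge, hence are not adjacent; iterating, no sequence of pairwise adjacent faces can cross from $G_1$ to $G_2$, contradicting face-connectedness.

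For the converse, suppose $G$ is a planar block graph containing a cycle, so that $G$ is $2$-connected. I would verify the two defining properties in turn. For the strict condition, use the standard fact that in a $2$-connected plane graph every face boundary is a simple cycle and every edge bounds exactly two faces. Consequently every edge lies on at least one internal face boundary (if $G = C_n$ this is immediate; otherwise any edge borders either two internal faces or one internal face and the external face), and every vertex, being on some edge, lies on some internal face boundary as well. For face-connectedness, given two internal faces $F_1$ and $F_2$, take a continuous path from a point in $F_1$ to a point in $F_2$ lying inside the closed topological disk bounded by the outer cycle of $G$, transverse to $G$ and avoiding the vertex set; the sequence of faces traversed by this path is a sequence of pairwise adjacent internal faces from $F_1$ to $F_2$.

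The main obstacle is the face-connectedness half of the converse, which leans on topological input: the outer face of a $2$-connected plane graph is bounded by a simple cycle, the enclosed region is a topological disk by the Jordan curve theorem, and a transversality argument is required to convert the connecting path into an actual sequence of face adjacencies. A purely combinatorial alternative, which I would fall back on if a topology-free proof is preferred, is to induct on the number of internal faces or to exploit an ear decomposition of the $2$-connected graph $G$ and track how each appended ear splits a single face into two adjacent ones.
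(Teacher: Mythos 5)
Your proof is correct, and it splits naturally into a half that matches the paper and a half that does not. The forward direction (strict face-connected $\Rightarrow$ no cut vertex) is essentially the paper's argument: decompose at a cut vertex as a wedge sum $G = H_1 \vee H_2$ and observe that faces whose bounding cycles lie in different summands share no edge, so no adjacency chain crosses the wedge point. You are in fact slightly more careful than the paper here, since you note explicitly that the strict condition forces each summand to contain a face (otherwise a summand would have an edge on no bounding cycle, which already violates strictness), and that the non-adjacency must be iterated along the whole chain. For the converse, the routes genuinely diverge. The paper argues by contradiction: a block graph has no cut vertices and no bridges, so a failure of strict face-connectedness forces at least two strict face components, and Lemma~\ref{facecomponentintersection} then says two such components of a connected bridgeless graph must meet in a cut vertex --- contradiction. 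You instead verify the two defining properties directly from $2$-connectivity: every face boundary is a cycle and every edge lies on an internal face (giving strictness), and a transversal arc inside the disk bounded by the outer cycle yields a chain of pairwise adjacent internal faces (giving face-connectedness). Your version imports more topology (Jordan curve, general position), for which your ear-decomposition fallback is the cleaner combinatorial substitute; what it buys is independence from Lemma~\ref{facecomponentintersection} and a constructive explanation of \emph{why} the faces are connected, whereas the paper's version is shorter but is terse about why the only way a bridgeless graph can fail to be strict face-connected is by having two or more strict face components.
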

\begin{proof}
Suppose $G$ is a planar block graph containing a cycle but not strict face-connected. By definition of block, this implies that $G$ contains no cut vertices and so also has no bridges. Therefore, the only possibility for $G$ is to contain at least 2 strict-face components. Since $G$ is connected and contains no bridges, by Lemma ~\ref{facecomponentintersection}, there must exist two strict face-components $\mathcal{F}_1$ and $\mathcal{F}_2$ such that $V(\mathcal{F}_i) \cap V(\mathcal{F}_j) = \{v\}$ where $v$ is a cut vertex, which is a contradiction.

Now suppose $G$ is strict face-connected but not a block graph. This implies that $G$ must contain a cut vertex $v$. Removing $v$ gives a graph $G'$ with two connected components $C_1$ and $C_2$. Therefore, every path in $G$ between vertices $v_1 \in C_1$ and $v_2 \in C_2$ must pass through $v$. It follows that $G = H_1 \vee H_2$ for some graphs $H_1$ and $H_2$. However, no face $F_1$ in $H_1$ can be adjacent to a face $F_2$ in $H_2$ since $E(H_1) \cap E(H_2) = \emptyset$. Therefore, $G$ is not strict face-connected which is a contradiction.
\end{proof}

Proposition ~\ref{strictfaceblock} shows that each strict face component of a planar graph $G$ is a block. However, the converse is not true. For example for $G_2$ in (\ref{eq:faceconnectedex}), the induced subgraph on the vertex of degree one along with its adjacent vertex forms a block, however this is not part of a strict face component. 

Let $G$ be a finite, simple and planar graph with strict face components $\mathcal{F}_1,\cdots,\mathcal{F}_m$. Observe that each cycle of $G$ is contained within a single strict face component. This observation gives the following result.

\begin{proposition}
\label{girthoffacecomponents}
Let $G$ be a finite, simple and planar graph with strict face components denoted $\mathcal{F}_1,\cdots,\mathcal{F}_m$. Then \[\text{girth}(G) = \min \{\text{girth}(\mathcal{F}_1),\cdots,\text{girth}(\mathcal{F}_m)\} \text{ and } \text{circum}(G) = \min \{\text{circum}(\mathcal{F}_1),\cdots,\text{circum}(\mathcal{F}_m)\}.\]
\qedno
\end{proposition}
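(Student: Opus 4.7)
The strategy is to justify the observation stated just before the proposition --- that each cycle of $G$ is contained within a single strict face component --- and then derive the result directly. Once the observation is established, the set of cycle lengths of $G$ coincides with the union of the cycle-length sets of the strict face components $\mathcal{F}_i$ (the reverse inclusion holds because each $\mathcal{F}_i$ is a subgraph of $G$). Taking minima then yields $\mathrm{girth}(G) = \min_{i} \mathrm{girth}(\mathcal{F}_i)$, and taking maxima yields $\mathrm{circum}(G) = \max_{i} \mathrm{circum}(\mathcal{F}_i)$; I read the second ``$\min$'' in the displayed statement as a typographical slip for ``$\max$'', since $\mathrm{circum}$ is by definition the length of the \emph{longest} cycle.

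For the observation, let $C$ be a cycle of $G$. By the Jordan Curve Theorem applied to the chosen planar embedding, $C$ bounds a closed topological disc $D \subset \mathbb{R}^2$, and the internal faces of $G$ lying inside $C$ are precisely the connected components of $\mathrm{int}(D) \setminus G$. I would claim that any two such faces $F$ and $F'$ lie in a common strict face component. To see this, join interior points of $F$ and $F'$ by a simple path $\gamma \subset \mathrm{int}(D)$ and perturb it generically so that it meets $G$ only via transverse crossings through the interiors of edges. Each crossing moves $\gamma$ from one face to an edge-adjacent one, so $F$ and $F'$ are linked by a sequence of adjacent faces and therefore lie in the same strict face component $\mathcal{F}_i$. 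Every edge of $C$ borders a face inside $D$ on its interior side, so each such edge belongs to the bounding cycle of some face in $\mathcal{F}_i$, and hence all edges and vertices of $C$ lie in $\mathcal{F}_i$.

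The main obstacle is this planar-topology step: the perturbation and transversality argument is intuitively clear but requires a little care if formalised. An alternative that sidesteps arc perturbation is to induct on the number of faces of $G$ contained in $D$. When there is only one, $C$ is its bounding cycle and the conclusion is immediate; otherwise pick a face $F$ inside $D$ sharing a maximal arc of edges with $C$, replace that arc by the complementary arc of $\partial F$ to obtain a cycle $C'$ bounding a disc $D'$ with strictly fewer interior faces, and combine the inductive hypothesis for $C'$ with the face-adjacency of $F$ to a face inside $D'$. Either route reduces the proposition to the elementary set-theoretic identity above, and I do not expect further complications.
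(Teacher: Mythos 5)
Your proposal is correct and takes essentially the same route as the paper, which offers no written proof beyond the observation that every cycle of $G$ lies in a single strict face component; you supply a justification of that observation (which the paper omits) and then deduce the identity exactly as intended. You are also right that the second $\min$ in the displayed statement is a typographical slip for $\max$, since the circumference of $G$ is attained by the longest cycle among all the strict face components.
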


Proposition ~\ref{girthoffacecomponents} implies that if $G$ is a finite, simple and planar graph with girth$(G) = \text{circum}(G)$, then each face component must also have equal girth and circumference. Therefore, it suffices to consider strict face-connected graphs. First, we will introduce a collection of graphs known as generalised book graphs. A \textit{generalised book graph} is denoted $B(n,L,p)$ where $1 \leq n \leq L-2$, $L \geq 3$ and $p \geq 2$. These graphs will be the building blocks for graphs with equal girth and circumference. The graph $B(n,L,p)$ is $p$ cycles $C_L$ of length $L$ glued together over a common path $P_n$ of length $n$. For example, $B(1,3,2)$ is the graph \[\begin{tikzpicture} 
	   \draw (-0.1,-0.1) -- (0.5,0.6) -- (0.5,-0.8) -- (-0.1,-0.1);
     \draw (0.5,0.6) -- (1.1,-0.1) -- (0.5,-0.8);
		 \draw [fill] (-0.1,-0.1) circle [radius=0.03];
		 \draw [fill] (0.5,0.6) circle [radius=0.03];
		 \draw [fill] (0.5,-0.8) circle [radius=0.03];
		 \draw [fill] (1.1,-0.1) circle [radius=0.03];
	   \node at (-0.35,-0.1) {1};
		 \node at (0.5,0.85) {2};
		 \node at (0.5,-1.05) {3};
		 \node at (1.35,-0.1) {4};
  \end{tikzpicture}\] and the generalised book graph $B(1,3,3)$ is \[\scalebox{1.5}{\begin{tikzpicture} 
     \draw (-0.1,-0.1) -- (0.5,-0.6); 
		 \draw (-0.1,-0.1) -- (0.2,0.5) -- (0.5,-0.6); 
		 \draw (0.9,0.3) -- (0.5,-0.6); 
		 \draw [dashed] (-0.1,-0.1) -- (0.9,0.3);
		 \draw (-0.1,-0.1) -- (-0.6,-0.8) -- (0.5,-0.6); 
  \end{tikzpicture}}.\] For the theorem that follows, we draw the generalised book graph in its planar form. For example $B(2,4,4)$ is a graph of the form  \[\scalebox{1.5}{\begin{tikzpicture}
     \draw (-0.1,-0.1) -- (-0.1,-0.9); 
		 \draw (-0.1,-0.1) -- (-0.5,-0.5) -- (-0.1,-0.9); 
		 \draw (-0.1,-0.1) -- (0.3,-0.5) -- (-0.1,-0.9); 
		 \draw (-0.1,-0.1) -- (0.7,-0.5) -- (-0.1,-0.9);
		 \draw (-0.1,-0.1) -- (-0.9,-0.5) -- (-0.1,-0.9);
      
     \draw [fill] (-0.1,-0.1) circle [radius=0.03];
		 \draw [fill] (-0.1,-0.5) circle [radius=0.03];
		 \draw [fill] (-0.1,-0.9) circle [radius=0.03];
		 \draw [fill] (-0.5,-0.5) circle [radius=0.03];
		 \draw [fill] (0.3,-0.5) circle [radius=0.03];
		 \draw [fill] (-0.9,-0.5) circle [radius=0.03];
		 \draw [fill] (0.7,-0.5) circle [radius=0.03];
  \end{tikzpicture}}.\] Observe that the generalised book graph $B(n,L,p)$ has $p$ faces and is strict face-connected. We can now prove the characterisation theorem for planar graphs with girth equal to circumference.

\begin{theorem}
\label{characterisation}
Let $G$ be a finite, simple and strict face-connected graph with $\text{girth}(G) = \text{circum}(G)$. Then if $G$ has $p = 1$ face, then $G = C_l$ for some $l \geq 3$. If $G$ has $p \geq 2$ faces, then $\text{girth}(G) = \text{circum}(G) = 2k$ for some $k \in \mathbb{N}$, $k\geq 2$ and $G = B(k,2k,p)$.
\end{theorem}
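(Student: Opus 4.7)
The case $p=1$ is immediate from the definition of strict face-connectedness: $G$ equals the single cycle bounding its unique face, so $G = C_l$ for some $l \geq 3$.

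For $p \geq 2$, I would first show that any two adjacent internal faces share a path of length exactly $L/2$, where $L = \text{girth}(G) = \text{circum}(G)$. Let $F, F'$ be two adjacent faces with bounding cycles $C, C'$ of length $L$, and write $C \cap C' = P$, a path of length $n$ between vertices $u$ and $v$; by Proposition~\ref{strictfaceblock} the graph $G$ is a block, so the face boundaries are simple cycles and $C$ and $C'$ meet in a single path. Let $P_1$ and $P_2$ be the two length-$(L-n)$ paths from $u$ to $v$ obtained by deleting the interior of $P$ from $C$ and $C'$ respectively. Then $F \cup F' \cup P$ is a simply connected planar region whose outer boundary is the simple cycle $P_1 \cup P_2$ of length $2(L-n)$. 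Since every cycle of $G$ has length $L$, we obtain $2(L-n) = L$, so $L = 2k$ is even; combined with the fact that the girth of a simple graph is at least $3$, this gives $k \geq 2$.

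From here I would extract the global structure. Each face has a boundary of length $2k$ and, since $G$ is $2$-edge-connected, each edge lies on exactly two face boundaries; hence distinct neighbours of a given face $F_i$ correspond to pairwise edge-disjoint length-$k$ halves of $\partial F_i$, so $F_i$ has at most two neighbours. Combined with face-connectedness, the face-adjacency graph is therefore a path or a cycle. Moreover, on any face with two neighbours both shared halves meet at the same pair of ``corner'' vertices, so iterating along the face-adjacency graph all shared paths are length-$k$ paths between a single fixed pair $\{a, b\}$ of vertices. To rule out the cyclic case I would apply Euler's formula: with $p$ internally disjoint length-$k$ paths between $a$ and $b$ one computes $V - E + F = (2 + p(k-1)) - pk + F = 2$, so $F = p$, giving only $p - 1$ internal faces, contradicting the hypothesis. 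Hence the face-adjacency is a path $F_1 - \cdots - F_p$; together with the two non-shared length-$k$ outer halves of $\partial F_1$ and $\partial F_p$ (which also join $a$ and $b$) we obtain exactly $p + 1$ internally disjoint length-$k$ paths between $a$ and $b$, which is precisely $B(k, 2k, p)$.

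The main obstacle is the first step: verifying that $P_1 \cup P_2$ is a simple cycle of length $2(L-n)$, rather than a closed walk with repeated vertices. This requires the $2$-connectedness of $G$ to ensure that $C$ and $C'$ meet in a single connected path and that $P_1, P_2$ are internally disjoint, so that the outer boundary of the simply connected region $F \cup F' \cup P$ is a single simple closed curve.
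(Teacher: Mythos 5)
Your proposal is essentially correct but takes a genuinely different route from the paper's proof. Both arguments hinge on the same local computation: two adjacent faces bounded by cycles of length $L$ meeting along a path of length $n$ yield an outer cycle of length $2(L-n)$, forcing $L=2n$ and hence $L$ even. From there the paper proceeds by induction on the number of faces: it deletes a face adjacent to the external face, applies the inductive hypothesis to obtain $B(k,2k,m)$, and then runs a case analysis showing that the deleted face can only reattach at one of the two extreme pages of the book (attachment to a middle page, or to both ends simultaneously, is excluded by planarity). You instead argue globally: since each face boundary is a $2k$-cycle and the portions shared with distinct neighbours are pairwise edge-disjoint length-$k$ subpaths, every internal face has at most two neighbours, so the face-adjacency graph is a path or a cycle, and Euler's formula eliminates the cyclic case. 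This avoids the induction and the delicate ``where can $F_{m+1}$ attach'' analysis, and I find it cleaner. One caveat, which applies to the paper's proof as well as yours: the claim that two adjacent face boundaries meet in a \emph{single} path does not follow from $2$-connectedness alone. A $2$-connected plane graph can have two internal faces whose boundaries share two vertex-disjoint edges (take a hexagon $acqdbp$ with chords $ab$ and $cd$ and embed it with the triangle $apb$ as the outer face; the hexagonal face and the quadrilateral face $acdb$ then share the disjoint edges $ac$ and $bd$). Ruling this out requires the equal-cycle-length hypothesis, not just the block structure, so the ``main obstacle'' you identify at the end is real but should be resolved by invoking $\mathrm{girth}(G)=\mathrm{circum}(G)$ rather than $2$-connectedness; the same remark applies to the asserted internal disjointness of the various shared length-$k$ paths in your final step.
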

\begin{proof}
First, if $G=C_m$ for some $m$ then the result is clear. Therefore we consider the case where $G$ has two or more faces.

We proceed by induction. Let $G$ be a simple, strict face-connected and finite graph with two faces, $F_1$ and $F_2$ with cycles $C_1$ bounding $F_1$ and $C_2$ bounding $F_2$. Since $\text{girth}(G) = \text{circum}(G)$, $C_1$ and $C_2$ have length $L$ for some $L \geq 3$. The faces $F_1$ and $F_2$ are adjacent, which implies that the cycles $C_1$ and $C_2$ must intersect along a common path $P_k$ for some $k \geq 1$. This creates a new cycle $C_3$ which starts at the endpoint of $P_k$, goes around $C_1$ (in the direction which does not travel along $P_k$) to the other endpoint of $P_k$ and then around $C_2$ (in the direction which does not travel along $P_k$). This is illustrated by the following diagram. \[\begin{tikzpicture}

     \draw [-stealth](-0.5,1.25) -- (0.5,1.25);
		 \draw [-stealth](1.25,0.5) -- (1.25,-0.5);
		 \draw [-stealth](0.5,-1.25) -- (-0.5,-1.25);
		 \draw [-stealth](-1.25,-0.5) -- (-1.25,0.5);
		
     \draw (0,0.75) -- (0,0.25);
		 \draw (0,-0.75) -- (0,-0.25);
		 \draw [fill] (0,0.75) circle [radius=0.03];
		 \draw [fill] (0,-0.75) circle [radius=0.03];
		 \draw(0,0.25)--(0,0.75);
		 \draw(0,-0.25)--(0,-0.75);
		 \draw [fill] (0,0.05) circle [radius=0.01];
		 \draw [fill] (0,0) circle [radius=0.01];
		 \draw [fill] (0,-0.05) circle [radius=0.01]; 
	
     \draw (0,0.75) -- (-0.5,1);
		 \draw (-1,0.75) -- (-0.5,1);
		 \draw (-1,0.75) -- (-1,0.25);
		
		 \draw [fill] (-0.5,1) circle [radius=0.03];
		 \draw [fill] (-1,0.75) circle [radius=0.03];
		
     \draw (0,0.75) -- (0.5,1);
		 \draw (1,0.75) -- (0.5,1);
		 \draw (1,0.75) -- (1,0.25);
		
		 \draw [fill] (0.5,1) circle [radius=0.03];
		 \draw [fill] (1,0.75) circle [radius=0.03];
		
		 \draw (0,-0.75) -- (-0.5,-1);
		 \draw (-1,-0.75) -- (-0.5,-1);
		 \draw (-1,-0.75) -- (-1,-0.25);
		
		 \draw [fill] (-0.5,-1) circle [radius=0.03];
		 \draw [fill] (-1,-0.75) circle [radius=0.03];
		
     \draw (0,-0.75) -- (0.5,-1);
		 \draw (1,-0.75) -- (0.5,-1);	
		 \draw (1,-0.75) -- (1,-0.25);
		
		 \draw [fill] (0.5,-1) circle [radius=0.03];
		 \draw [fill] (1,-0.75) circle [radius=0.03];
		
		 \draw [fill] (1,0.05) circle [radius=0.01];
		 \draw [fill] (1,0) circle [radius=0.01];
		 \draw [fill] (1,-0.05) circle [radius=0.01];
		
		 \draw [fill] (-1,0.05) circle [radius=0.01];
		 \draw [fill] (-1,0) circle [radius=0.01];
		 \draw [fill] (-1,-0.05) circle [radius=0.01];
		
		 \node at (0.5,0) {$C_1$};
		
		 \node at (-0.5,0) {$C_2$};

  \end{tikzpicture}\] This cycle has length $2(L-k)$. Since $\text{girth}(G) = \text{circum}(G)$, $2(L-k) = L$ which implies that $L=2k$. Therefore, the cycle must have length $2k$ and so $G = B(k,2k,2)$.

Now consider a simple, strict face-connected and finite graph $G$ with three faces $F_1$, $F_2$ and $F_3$ bounded by cycles $C_1$, $C_2$ and $C_3$. By assumption, $\text{girth}(G) = \text{circum}(G)$ and so these cycles have a common length $L$. Since $G$ is face-connected, $C_1$ must be adjacent to another face. Suppose without loss of generality $C_1$ is adjacent to $C_2$. Considering the induced subgraph $H$ on the vertices of $C_1$ and $C_2$, the $n=2$ case implies $H = B(k,2k,2)$ for some $k \geq 2$. By Lemma ~\ref{subgraphequalG&C}, the lengths of the cycles in $H$ must be equal to the lengths of the cycles in $G$ and so $L=2k$. By definition of $B(k,2k,2)$, $C_1$ and $C_2$ intersect over a common path of length $k$.

Now consider the cycle $C_3$ and assume without loss of generality that $C_3$ is adjacent to $C_2$. By a similar argument, considering the induced subgraph $H'$ on the vertices of $C_2$ and $C_3$, $C_2$ and $C_3$ must intersect over a common path of length $k$. Therefore, $C_1$ and $C_3$ are both adjacent to $C_2$ over a path of length $k$. However, the edge set of these paths of length $k$ must be disjoint as $G$ is planar and so $G = B(k,2k,3)$.

Suppose the result is true for a strict face-connected graph with $k \leq m$ faces and consider a simple, strict face-connected and finite graph $G$ with $m+1$ faces, $m \geq 3$. Let $F_1,\cdots,F_{m+1}$ be the faces of $G$ and let $C_i$ be the cycle bounding the face $F_i$ for $1 \leq i \leq m$. Since $\text{girth}(G) = \text{circum}(G)$, $C_i$ must have a common length $L$ for all $i$. Suppose without loss of generality that $F_{m+1}$ is adjacent to the external face of $G$. Consider the induced subgraph $H$ on the vertices of the remaining cycles $C_1,\cdots,C_m$. Observe that $H$ is a graph with $m$ faces and so by the inductive hypothesis has the form $B(k,2k,m)$ for some $k \geq 2$. By Lemma ~\ref{subgraphequalG&C}, the lengths of the cycles in $H$ must be equal to the lengths of the cycles in $G$ and so $L=2k$. Also by the structure of the generalised book graph, we can order the cycles $C_1, \cdots, C_m$ such that consecutive cycles intersect along a common path of length $k$ and any two non-consecutive cycles $C_i$ and $C_j$ intersect at two vertices $\{v_0,v_1\}$ independent of $i$ and $j$. 

Now consider the remaining face $F_{m+1}$. Since $G$ is face-connected, there exists $1 \leq l \leq m$ such that $F_{n+1}$ is adjacent to $F_{l}$. Suppose $1 < l < m$. The induced subgraph $H_l$ on the vertices of $C_l$ and $C_{m+1}$ has two faces and so $H_l = B(k_l,2k_l,2)$. By Lemma ~\ref{subgraphequalG&C}, the lengths of the cycles in $H_l$ must be equal to the lengths of the cycles in $G$ and so $k_l = k$. This implies that $C_{m+1}$ intersects $C_{l}$ over a path of length $k$. However by definition of $H$, this implies that $F_{m+1}$ is also adjacent to $F_{l-1}$ or $F_{l+1}$. Suppose without loss of generality, that $F_{m+1}$ is adjacent to $F_{l+1}$ and consider the induced subgraph $H_{l+1}$ on the vertices of $F_{m+1}$ and $F_{l+1}$. By the same argument, $H_{l+1} = B(k,2k,2)$ and $C_{m+1}$ intersects $C_{l+1}$ over a path of length $k$. The cycle $C_{m+1}$ is adjacent to both $C_{l}$ and $C_{l+1}$ over paths of length $k$. However since $G$ is planar, the edge set of these paths must be disjoint which cannot happen as $F_l$ and $F_{l+1}$ are adjacent. Therefore, $C_{m+1}$ cannot be adjacent to $C_l$ for $1 < l < m$.

Now suppose without loss of generality that $F_{m+1}$ is adjacent to $F_m$ and is not adjacent to $F_{l}$ for $1<l<m$. Consider the induced subgraph $H$ on the vertices of $C_m$ and $C_{m+1}$. By a similar argument to the previous case, the cycles $C_{m}$ and $C_{m+1}$ must intersect over a common path of length $k$. Now, suppose $F_{m+1}$ is also adjacent to $F_1$. By considering the induced subgraph $H'$ on the vertices of $C_m$ and $C_1$, the cycles $C_{m}$ and $C_{1}$ must intersect over a common path of length $k$. However since $G$ is planar, the edge set of these paths must be disjoint which cannot happen as $F_{m+1}$ is an internal face. Therefore, $C_{m+1}$ cannot be adjacent to $C_1$. Hence $G = B(k,2k,m+1)$ as desired.
\end{proof}

Combining Proposition ~\ref{girthoffacecomponents} and Theorem ~\ref{characterisation} gives the following corollary.

\begin{corollary}
\label{generalcharacterisation}
Let $G$ be a finite, simple and planar graph with $r = \text{girth}(G) = \text{circum}(G)$. Let $\mathcal{F}_1,\cdots,\mathcal{F}_m$ be the strict face components of $G$. If $r$ is even, then $\mathcal{F}_i = B\left(\frac{r}{2},r,p\right)$ or $\mathcal{F}_i = C_r$. If $r$ is odd, then $\mathcal{F}_i = C_r$.
\qedno
\end{corollary}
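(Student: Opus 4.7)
The plan is to reduce to Theorem \ref{characterisation} by applying it to each strict face component individually. The first step is to verify that each strict face component $\mathcal{F}_i$ is itself a strict face-connected graph: this follows immediately from the definition, since $\mathcal{F}_i$ is built from the cycles bounding a maximal collection of mutually connected faces, hence it is face-connected, and its vertices and edges are precisely the union of the vertices and edges of those bounding cycles.

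Next I would observe that every cycle of $G$ lies entirely inside a single strict face component $\mathcal{F}_i$ (this is the observation stated in the paragraph preceding Proposition \ref{girthoffacecomponents}). Combined with the hypothesis $\text{girth}(G) = \text{circum}(G) = r$, this forces every cycle of $\mathcal{F}_i$ to have length exactly $r$, so $\text{girth}(\mathcal{F}_i) = \text{circum}(\mathcal{F}_i) = r$.

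Now I apply Theorem \ref{characterisation} to each $\mathcal{F}_i$. If $\mathcal{F}_i$ has a single face, then $\mathcal{F}_i = C_l$ for some $l \geq 3$, and since all cycles in $\mathcal{F}_i$ have length $r$ we get $l = r$, hence $\mathcal{F}_i = C_r$. If $\mathcal{F}_i$ has $p \geq 2$ faces, then Theorem \ref{characterisation} forces $r = 2k$ for some $k \geq 2$ with $\mathcal{F}_i = B(k, 2k, p) = B(r/2, r, p)$.

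Finally, I split on the parity of $r$. If $r$ is odd, the second alternative above is impossible since it would require $r$ to be even, so the only possibility is $\mathcal{F}_i = C_r$. If $r$ is even, both alternatives remain available, yielding $\mathcal{F}_i = C_r$ or $\mathcal{F}_i = B(r/2, r, p)$ as claimed. There is essentially no obstacle here beyond carefully invoking the earlier results; the mild bookkeeping is just checking that the equalities $\text{girth}(\mathcal{F}_i) = \text{circum}(\mathcal{F}_i) = r$ hold component-wise, which is immediate from the fact that cycles do not cross between distinct strict face components.
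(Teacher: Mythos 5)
Your proposal is correct and follows essentially the same route as the paper, which obtains the corollary by combining Proposition~\ref{girthoffacecomponents} (every cycle lies in a single strict face component, so each $\mathcal{F}_i$ inherits $\text{girth} = \text{circum} = r$) with Theorem~\ref{characterisation} applied componentwise, then splitting on the parity of $r$. Your explicit verification that each $\mathcal{F}_i$ is strict face-connected and that the cycle lengths transfer componentwise is exactly the bookkeeping the paper leaves implicit.
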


This fully characterises the structure of a graph with girth equal to circumference. However given a simple, planar graph $G$, it may not be easy to determine if $G$ has the required form. Proposition ~\ref{strictfaceblock} can be used to rephrase Corollary ~\ref{generalcharacterisation} in terms of blocks in order to determine if $G$ has the required form algorithmically. In particular, let $G$ be a finite, simple and planar graph and define $G_B$ to be the graph obtained from removing the bridges in $G$, and then removing the isolated vertices. Since an edge is a bridge iff it is not contained in a cycle, $G_B$ is the union of the strict face components of $G$. By Proposition ~\ref{strictfaceblock}, these strict face components are blocks and we obtain the following result.

\begin{theorem}
\label{blockgeneralcharacterisation}
Let $G$ be a finite, simple and planar graph and $H_1,\cdots, H_k$ be the blocks of $G_B$. Then $G$ has $r = \text{girth}(G) = \text{circum}(G)$ iff for all $1 \leq i \leq k$, $H_i = B\left(\frac{r}{2},r,p_i\right)$ or $H_i = C_r$ and $r$ is even, or $H_i = C_r$ for all $1 \leq i \leq k$, if $r$ is odd.
\qedno
\end{theorem}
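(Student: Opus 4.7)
The plan is to reduce Theorem~\ref{blockgeneralcharacterisation} directly to Corollary~\ref{generalcharacterisation} by showing that the blocks $H_1,\ldots,H_k$ of $G_B$ are precisely the strict face components of $G$.

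First I would observe that since an edge is a bridge iff it lies on no cycle, removing bridges and then any resulting isolated vertices neither destroys nor creates any cycle. Consequently $G$ and $G_B$ share the same cycles, girth, circumference, and strict face components, and in particular $G$ has $\text{girth}(G) = \text{circum}(G) = r$ iff $G_B$ does. This reduction means I may work with $G_B$ throughout.

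The main step is to identify the blocks of $G_B$ with its strict face components. By Proposition~\ref{strictfaceblock}, each strict face component $\mathcal{F}_j$ is a planar block graph, and by Lemma~\ref{facecomponentintersection} any two distinct strict face components meet in at most a single shared vertex, which is a cut vertex of $G_B$. Together these facts show that each $\mathcal{F}_j$ is a maximal connected subgraph of $G_B$ containing no cut vertex, hence a block. Conversely, any block $H_i$ of $G_B$ must contain a cycle, since $G_B$ has no bridges; that cycle lies inside a unique strict face component $\mathcal{F}_j$, and since $\mathcal{F}_j$ is itself a block containing $H_i$, the maximality of $H_i$ forces $H_i = \mathcal{F}_j$.

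Once the blocks of $G_B$ are identified with the strict face components of $G$, Corollary~\ref{generalcharacterisation} applied to $G_B$ delivers the conclusion: when $r$ is even, each strict face component (equivalently each $H_i$) must be $B(r/2, r, p_i)$ or $C_r$, and when $r$ is odd each must be $C_r$. I expect the most delicate point to be the reverse inclusion in the identification step, namely ensuring each $H_i$ coincides with, rather than merely being contained in, some strict face component; this is precisely where Lemma~\ref{facecomponentintersection} together with Proposition~\ref{strictfaceblock} does the real work, by ruling out any larger block-like subgraph stitched together from pieces of several face components.
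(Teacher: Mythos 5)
Your proposal is correct and follows essentially the same route as the paper: the paper's (very brief) justification, given in the paragraph preceding the theorem, is precisely that $G_B$ is the union of the strict face components of $G$ because bridges are exactly the edges lying on no cycle, that these components are blocks by Proposition~\ref{strictfaceblock}, and that Corollary~\ref{generalcharacterisation} then gives the characterisation. Your write-up simply fills in the details of the block/face-component identification (using Lemma~\ref{facecomponentintersection} for maximality) that the paper leaves implicit.
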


\section{Upper bounds on the number of edges in a graph with equal girth and circumference}
\label{sec:bound}

In this section, we prove upper bounds on the number of edges in a graph with girth equal to circumference. To do this, we will consider a general construction of planar graphs. Let $G_1,\cdots, G_k$ be graphs. Construct a graph $G'_2$ from the disjoint union of $G_1$ and $G_2$ by identifying a vertex in $G_1$ and a vertex in $G_2$. Iteratively define $G'_i$ for $3 \leq i \leq k$ from the disjoint union of $G'_{i-1}$ and $G_i$ by identifying a vertex in $G'_{i-1}$ and a vertex in $G_i$. Since at each stage of the construction, we are identifying a single vertex of two graphs together, we obtain the following result.

\begin{lemma}
\label{invariantvertexchoice}
The number of vertices and edges in the graphs $G_1', \cdots, G_k'$ are invariant under the choice of vertex identification.
\qedno
\end{lemma}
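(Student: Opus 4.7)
The plan is to prove this by induction on $k$, tracking the vertex and edge counts explicitly. The key observation is that the operation of identifying a single vertex $u \in V(G_{i-1}')$ with a single vertex $v \in V(G_i)$ merges exactly two vertices into one and introduces no new edges (nor removes any), regardless of the choice of $u$ and $v$.

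For the base case $i = 2$, I would note that $G_2'$ is formed from the disjoint union $G_1 \sqcup G_2$ by identifying one vertex of $G_1$ with one vertex of $G_2$. The disjoint union has $|V(G_1)| + |V(G_2)|$ vertices and $|E(G_1)| + |E(G_2)|$ edges. Identifying two vertices reduces the vertex count by exactly one, while the edge set is just the disjoint union of the two edge sets (the identification does not merge any edges because $G_1$ and $G_2$ are vertex-disjoint before the identification). Hence
\[
|V(G_2')| = |V(G_1)| + |V(G_2)| - 1, \qquad |E(G_2')| = |E(G_1)| + |E(G_2)|,
\]
and neither quantity depends on which vertex of $G_1$ or of $G_2$ is chosen.

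For the inductive step, assume the claim holds for $G_{i-1}'$, so that $|V(G_{i-1}')|$ and $|E(G_{i-1}')|$ are determined solely by the counts $|V(G_1)|, \ldots, |V(G_{i-1})|$ and $|E(G_1)|, \ldots, |E(G_{i-1})|$. Then $G_i'$ is obtained from $G_{i-1}' \sqcup G_i$ by a single vertex identification. By the same reasoning as the base case, $|V(G_i')| = |V(G_{i-1}')| + |V(G_i)| - 1$ and $|E(G_i')| = |E(G_{i-1}')| + |E(G_i)|$, which are independent of the chosen vertices. Unwinding the induction yields the closed forms
\[
|V(G_k')| = \sum_{i=1}^{k} |V(G_i)| - (k-1), \qquad |E(G_k')| = \sum_{i=1}^{k} |E(G_i)|,
\]
both functions solely of the invariants of the $G_i$.

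There is no real obstacle here; the only subtle point to articulate carefully is why no edges are ever identified during the wedge-type gluing. This follows because at each stage we take a \emph{disjoint} union before identifying a single vertex, so any two edges in the resulting graph come from disjoint edge sets and therefore remain distinct after the single-point quotient. This justifies the additivity of the edge count at each step, and the inductive bookkeeping of the vertex count completes the proof.
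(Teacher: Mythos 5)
Your proof is correct and is essentially the argument the paper intends: the paper states this lemma without proof, justifying it only by the remark that each stage identifies a single vertex of two disjoint graphs, and your induction simply formalises that observation (your closed forms also match the paper's Lemma~\ref{wedgevertandedge}). The point you flag about no edges being merged because the union is disjoint before the identification is the right subtlety to make explicit.
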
 

\begin{construction}
\label{planargraphconstruction}
Let $G$ be a simple, finite and planar graph and let $\mathcal{F}_1,\cdots,\mathcal{F}_k$ be the strict face components of $G$. Lemma ~\ref{facecomponentintersection} implies that any edge contained in a cycle in $G$ must be contained in a unique strict face component of $G$. Conversely, any edge not contained in any cycle is a bridge. Therefore, we can view $G$ as strict face components joined together by a single cut vertex (by Lemma ~\ref{facecomponentintersection}) or trees. This implies that we can construct any planar graph $G$ iteratively in the following way. Define a graph $G_2$ as the disjoint union of a strict face component and another strict face component or a tree as appropriate and identifying a single vertex in each graph. Continue this process by defining $G_i$ to be the disjoint union of $G_{i-1}$ and a strict face component or a tree and identifying a single vertex in each graph. Since $G$ is a finite graph, this process will terminate giving $G = G_n$ for some $n \geq 1$. \end{construction}

In what follows, we will only be concerned with the number of edges and vertices contained within $G$. Lemma ~\ref{invariantvertexchoice} implies that the choice of vertex identification does not affect the number of vertices or edges in the graph. Therefore, it will be simplest to consider the wedge sum of the strict face components and trees. 

The number of vertices and edges of a wedge sum can be determined easily in terms of the summands. At each stage, only one vertex is being identified. Therefore, the number of edges is unaffected, and two vertices in the disjoint union become one vertex in the wedge sum. Therefore, we obtain the following.

\begin{lemma}
\label{wedgevertandedge}
Let $G_1,\cdots,G_n$ be graphs, then \[\left\lvert V\left(\bigvee\limits_{i=1}^n G_i\right) \right\rvert = \sum\limits_{i=1}^n |V(G_i)|-(n-1), \text{ } \left\lvert E\left(\bigvee\limits_{i=1}^n G_i\right) \right\rvert = \sum\limits_{i=1}^n |E(G_i)|.\]
\qedno
\end{lemma}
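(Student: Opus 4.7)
The plan is a direct induction on $n$ using the iterative definition of the wedge sum suggested by Construction \ref{planargraphconstruction}. The key observation is that at each stage we take a disjoint union of two graphs and identify one chosen vertex from each; this operation merges two vertices into one, decreasing the total vertex count by exactly one, while no edges are merged, since the two graphs are disjoint prior to the identification.

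For the base case $n=1$, both sides of each equation reduce to $|V(G_1)|$ and $|E(G_1)|$ respectively. For $n=2$, the definition of $G_1 \vee G_2$ together with the observation above immediately yields
\[
|V(G_1 \vee G_2)| = |V(G_1)| + |V(G_2)| - 1, \qquad |E(G_1 \vee G_2)| = |E(G_1)| + |E(G_2)|,
\]
and Lemma \ref{invariantvertexchoice} ensures these counts are independent of the chosen base vertices.

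For the inductive step, assume the formulas hold for any wedge sum of $n$ graphs. Realise $\bigvee_{i=1}^{n+1} G_i$ as $\bigl(\bigvee_{i=1}^{n} G_i\bigr) \vee G_{n+1}$ under a suitable choice of base vertices (again justified by Lemma \ref{invariantvertexchoice}). Applying the $n=2$ case to this outer wedge sum gives
\[
\left|V\left(\bigvee_{i=1}^{n+1} G_i\right)\right| = \left|V\left(\bigvee_{i=1}^{n} G_i\right)\right| + |V(G_{n+1})| - 1,
\]
\[
\left|E\left(\bigvee_{i=1}^{n+1} G_i\right)\right| = \left|E\left(\bigvee_{i=1}^{n} G_i\right)\right| + |E(G_{n+1})|,
\]
and substituting the inductive hypothesis produces $\sum_{i=1}^{n+1} |V(G_i)| - n$ on the vertex side and $\sum_{i=1}^{n+1} |E(G_i)|$ on the edge side, as desired.

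There is no substantive obstacle here: the argument is essentially bookkeeping, telescoping one vertex identification per step over $n-1$ stages. The only point requiring care is the explicit appeal to Lemma \ref{invariantvertexchoice}, which guarantees that the iterative wedge sum notation $\bigvee_{i=1}^{n} G_i$ is well defined at the level of vertex and edge counts independently of the particular base vertices selected at each stage of the construction.
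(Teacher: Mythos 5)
Your induction is correct and matches the paper's reasoning: the paper justifies this lemma by exactly the same observation, namely that each of the $n-1$ identifications merges two vertices into one and leaves the edge count unchanged, and simply omits the formal inductive bookkeeping. Your explicit appeal to Lemma~\ref{invariantvertexchoice} for well-definedness of the counts is a reasonable (if minor) addition.
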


By Corollary ~\ref{generalcharacterisation}, Construction ~\ref{planargraphconstruction} and Lemma ~\ref{invariantvertexchoice}, to consider the number of edges in a finite, simple and planar graph $G$ with $r= \text{girth}(G) = \text{circum}(G)$, it suffices to consider $\bigvee_{i=1}^{k_1} \mathcal{F}_i \vee \bigvee_{j=1}^{k_2} T_j$ where $T_j$ is a tree and $\mathcal{F}_i = B(\frac{r}{2},r,p_i)$ for some $p_i \geq 2$ or $\mathcal{F}_i = C_r$ if $r$ is even, or $\mathcal{F}_i = C_r$ if $r$ is odd. Further, without loss of generality we can consider $G' = \bigvee_{i=1}^m \mathcal{F}_i \vee P_k$ by Lemma ~\ref{invariantvertexchoice}. In the case that $r$ is even, we can view $C_r$ as two paths of length $\frac{r}{2}$ glued together by their endpoints, therefore it makes sense to define $B(\frac{r}{2},r,1)$ as $C_r$ in this case. 

We now study the properties of the wedge sum of generalised book graphs and path graphs which will then be applied to determine a bound on the number of edges in $G$. Viewing $B(k,2k,p)$ as $p+1$ path graphs $P_k$ of length $k$ glued together by their end points, we obtain the following result.

\begin{lemma}
\label{vertandedgebook}
Let $B(k,2k,p)$ be a generalised book graph with $k\geq 1$ and $p \geq 1$. Then \[\left\lvert V(B(k,2k,p))\right\rvert = (k-1)(p+1)+2, \text{ } \left\lvert E(B(k,2k,p))\right\rvert = k(p+1).\]
\qedno
\end{lemma}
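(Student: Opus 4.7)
The plan is to use the decomposition stated in the sentence immediately before the lemma: $B(k,2k,p)$ is built from $p+1$ copies of the path graph $P_k$ glued together along their two endpoints. Explicitly, the common spine of the book is one copy of $P_k$, and each of the $p$ bounding cycles $C_{2k}$ contributes the complementary half-cycle $P_k$ connecting the two endpoints of the spine. These $p+1$ paths share exactly their two endpoint-vertices and are pairwise internally disjoint; this follows from the planar picture, since $B(k,2k,p)$ is simple, strict face-connected, and its bounding cycles pairwise meet precisely along the spine and nowhere else.

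Given this decomposition, both counts are immediate. Gluing is performed only along vertices, so no edges are identified or collapsed. Hence the number of edges is the sum of $|E(P_k)| = k$ over the $p+1$ copies, which is $k(p+1)$.

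For the vertex count, start from the disjoint union of $p+1$ copies of $P_k$, containing $(p+1)(k+1)$ vertices. The gluing identifies the two endpoint-vertices of each copy in pairs, so the $2(p+1)$ endpoints collapse to just $2$ vertices, a reduction of $2p$. Therefore
\[
|V(B(k,2k,p))| \;=\; (p+1)(k+1) - 2p \;=\; (p+1)(k-1) + 2,
\]
as claimed. (The special case $p=1$, where $B(k,2k,1) = C_{2k}$ by the convention introduced just before the lemma, matches the formula since $C_{2k}$ is exactly two copies of $P_k$ glued at both endpoints.)

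There is no genuine obstacle here beyond justifying the decomposition. The only subtlety worth flagging is the verification that distinct copies of $P_k$ share no interior vertex and no edge; this is precisely the content of the planar gluing description of $B(k,2k,p)$ and requires no further argument. With the decomposition in hand the result is a short arithmetic calculation, so the proof will be a single short paragraph in the paper.
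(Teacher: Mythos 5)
Your proof is correct and follows exactly the route the paper intends: the paper offers no written proof (the lemma is marked as immediate), relying on the sentence just before it that views $B(k,2k,p)$ as $p+1$ copies of $P_k$ glued at their endpoints, which is precisely your decomposition and yields the same vertex and edge counts.
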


Now, for a fixed number of vertices, we bound the number of edges of the wedge sum of two generalised book graphs.

\begin{lemma}
\label{twobookgraph}
Let $B(\frac{r}{2},r,a)$ and $B(\frac{r}{2},r,b)$ be generalised book graphs with $r \geq 4$, $r$ even and $a,b \geq 1$. Let \[n = \left\lvert V\left(B\left(\frac{r}{2},r,a\right) \vee B\left(\frac{r}{2},r,b\right)\right)\right\rvert \text{, } p = \left\lfloor \frac{\left(\frac{r}{2}-1\right)(a+b+1)+1}{\frac{r}{2}-1}\right\rfloor,\text{ } c = n-2-\left(\frac{r}{2}-1\right)\left(p+1\right).\] Then \[(i) \left\lvert V\left(B\left(\frac{r}{2},r,a\right) \vee B\left(\frac{r}{2},r,b\right)\right) \right\rvert = \left\lvert V\left(B\left(\frac{r}{2},r,p\right) \vee P_{c}\right)\right\rvert\]\[(ii) \left\lvert E\left(B\left(\frac{r}{2},r,a\right) \vee B\left(\frac{r}{2},r,b\right)\right) \right\rvert \leq \left\lvert E\left(B\left(\frac{r}{2},r,p\right) \vee P_{c}\right)\right\rvert.\]
\end{lemma}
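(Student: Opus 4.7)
The plan is to prove both claims by direct computation using the vertex and edge formulas in Lemmas \ref{wedgevertandedge} and \ref{vertandedgebook}, expressing every quantity in terms of $r$, $a$, $b$. Writing $k = r/2$ for brevity, Lemma \ref{vertandedgebook} gives $|V(B(r/2, r, x))| = (k-1)(x+1)+2$ and $|E(B(r/2, r, x))| = k(x+1)$, and Lemma \ref{wedgevertandedge} then yields
\[ n = (k-1)(a+1) + 2 + (k-1)(b+1) + 2 - 1 = (k-1)(a+b+2) + 3. \]
The next step is to simplify $p$: dividing the numerator $(k-1)(a+b+1)+1$ by $k-1$ gives $(a+b+1) + \tfrac{1}{k-1}$, so $p = a+b+1+\lfloor 1/(k-1)\rfloor$, equal to $a+b+2$ when $r = 4$ and $a+b+1$ when $r \geq 6$. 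In particular $p \geq a+b+1$, and a short check using the formula for $n$ gives $c \in \{0, 1\}$.

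Part (i) then reduces to unwinding the definitions. Applying Lemma \ref{wedgevertandedge} once more,
\[ \left\lvert V\left( B(r/2, r, p) \vee P_c\right) \right\rvert = (k-1)(p+1) + 2 + (c+1) - 1 = (k-1)(p+1) + c + 2, \]
and substituting the defining identity $c = n - 2 - (k-1)(p+1)$ collapses this to $n$, which by definition equals the left-hand side of (i).

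For part (ii), the edge counts are
\[ \left\lvert E\left( B(r/2,r,a) \vee B(r/2,r,b) \right) \right\rvert = k(a+1) + k(b+1) = k(a+b+2), \]
\[ \left\lvert E\left( B(r/2,r,p) \vee P_c \right) \right\rvert = k(p+1) + c. \]
Their difference is $k(p-a-b-1) + c$, which is non-negative because $p \geq a+b+1$ and $c \geq 0$ as established above. The only mild obstacle is the case split on whether $k - 1 = 1$ or $k-1 \geq 2$ forced by the floor in the definition of $p$; both cases yield the inequality with room to spare (difference $2$ when $r = 4$ and difference $1$ when $r \geq 6$), so no delicate combinatorial argument is needed.
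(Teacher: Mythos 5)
Your proof is correct and takes essentially the same approach as the paper: both are direct computations with the vertex and edge formulas of Lemmas~\ref{wedgevertandedge} and~\ref{vertandedgebook}, verifying (i) by substitution and (ii) by comparing $k(a+b+2)$ with $k(p+1)+c$. The only cosmetic difference is that you evaluate the floor in $p$ explicitly (splitting on $r=4$ versus $r\geq 6$, giving slack $2$ and $1$ respectively), whereas the paper keeps the floor symbolic and bounds it; the conclusions agree.
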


\begin{proof}
First, by Lemma ~\ref{wedgevertandedge} and Lemma ~\ref{vertandedgebook}, \[n = \left(\frac{r}{2}-1\right)(a+1)+2+\left(\frac{r}{2}-1\right)(b+1)+2-1 = 2+\left(\frac{\left(\frac{r}{2}-1\right)(a+b+1)+1}{\frac{r}{2}-1}+1\right)\left(\frac{r}{2}-1\right)\]\[\geq 2+\left(p+1\right)\left(\frac{r}{2}-1\right) = \left\lvert V(B(\frac{r}{2},r,p))\right\rvert.\] Therefore $c \geq 0$, and so $P_c$ is well defined. Moreover, $(i)$ follows from Lemma ~\ref{wedgevertandedge} and Lemma ~\ref{vertandedgebook}.

For $(ii)$, consider $\left\lvert E\left(B\left(\frac{r}{2},r,a\right) \vee B\left(\frac{r}{2},r,b\right)\right)\right\rvert$. By Lemma ~\ref{wedgevertandedge} and Lemma ~\ref{vertandedgebook}, \[\left\lvert E\left(B\left(\frac{r}{2},r,a\right) \vee B\left(\frac{r}{2},r,b\right)\right)\right\rvert = \frac{r}{2}(a+1)+\frac{r}{2}(b+1)\]\[ = \left\lfloor \frac{\left(\frac{r}{2}-1\right)(a+b+1)}{\frac{r}{2}-1}\right\rfloor + \left(\frac{r}{2}-1\right)(a+b+2)+1 \leq \left\lfloor \frac{\left(\frac{r}{2}-1\right)(a+b+1)+1}{\frac{r}{2}-1}\right\rfloor + \left(\frac{r}{2}-1\right)(a+b+2)+1\]\[ < \left\lfloor \frac{\left(\frac{r}{2}-1\right)(a+b+1)+1}{\frac{r}{2}-1}\right\rfloor + \left(\frac{r}{2}-1\right)(a+b+2) +2 = \left\lfloor \frac{\left(\frac{r}{2}-1\right)(a+b+1)+1}{\frac{r}{2}-1}\right\rfloor +n-1\]\[ = \left\lvert E\left(B\left(\frac{r}{2},r,p\right) \vee P_c\right)\right\rvert.\]
\end{proof}

Next, for a fixed number of vertices, we bound the number of edges of the wedge sum of a generalised book graph and a path graph.

\begin{lemma}
\label{booktree}
Let $B(\frac{r}{2},r,p)$ and $B(\frac{r}{2},r,p')$ be generalised book graphs with $r \geq 4$ where $r$ is even, and $1 \leq p' \leq p$. Let $P_c$ and $P_{c'}$ be path graphs with $0 \leq c \leq c'$. If $\left\lvert V\left(B\left(\frac{r}{2},r,p\right) \vee P_c\right) \right\rvert = \left\lvert V\left(B\left(\frac{r}{2},r,p'\right) \vee P_{c'}\right) \right\rvert$, then \[\left\lvert E\left(B\left(\frac{r}{2},r,p'\right) \vee P_{c'}\right) \right\rvert \leq \left\lvert E\left(B\left(\frac{r}{2},r,p\right)\vee P_c\right)\right\rvert.\]
\end{lemma}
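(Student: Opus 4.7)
The plan is to argue this purely computationally, using Lemma \ref{wedgevertandedge} and Lemma \ref{vertandedgebook} as the only inputs; there is no need for induction, planarity, or the structural characterisation results. First I would apply the two lemmas to obtain closed-form expressions for both the vertex and edge counts of a graph of the shape $B(\tfrac{r}{2},r,q)\vee P_d$ as linear functions of $q$ and $d$. Concretely, this yields $|V(B(\tfrac{r}{2},r,q)\vee P_d)| = (\tfrac{r}{2}-1)(q+1)+d+2$ and $|E(B(\tfrac{r}{2},r,q)\vee P_d)| = \tfrac{r}{2}(q+1)+d$, and I would record the analogous expressions with $(p',c')$.

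Next, I would translate the hypothesised equality of vertex counts into a single linear equation. Equating the two vertex-count formulas and cancelling the constant $2$ reduces the hypothesis to
\[
\left(\tfrac{r}{2}-1\right)(p - p') = c' - c,
\]
which is internally consistent with the assumptions $p' \leq p$ and $c \leq c'$ (both sides are non-negative). I would then form the difference of edge counts and substitute this relation: the $\tfrac{r}{2}(p-p')$ coming from the book graphs combines with the $-(c'-c) = -(\tfrac{r}{2}-1)(p-p')$ coming from the paths, and the whole expression collapses to $p-p' \geq 0$. That inequality is exactly the claim.

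There is essentially no main obstacle here; the proof is a short bookkeeping exercise once the two previous lemmas are invoked. The only points requiring a little care are (i) checking that the $P_{c}$ and $P_{c'}$ appearing in the statement are well-defined for all admissible values (which is immediate from $0 \leq c \leq c'$, with $P_0$ interpreted as a single vertex consistently with $|V(P_c)|=c+1$, $|E(P_c)|=c$), and (ii) ensuring the conventions on $|V(P_c)|$ and $|E(P_c)|$ match those used in the earlier lemmas and in Lemma \ref{twobookgraph}, so that the linear relation derived above is exactly the right one.
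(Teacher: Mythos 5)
Your proposal is correct and follows essentially the same route as the paper: both derive the relation $c'-c=\left(\tfrac{r}{2}-1\right)(p-p')$ from the vertex-count hypothesis via Lemmas \ref{wedgevertandedge} and \ref{vertandedgebook}, substitute it into the edge counts, and reduce the claim to $p-p'\geq 0$. The computation checks out.
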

\begin{proof}
By Lemma ~\ref{wedgevertandedge} and Lemma ~\ref{vertandedgebook}, the assumption $\left\lvert V\left(B\left(\frac{r}{2},r,p\right) \vee P_c\right) \right\rvert = \left\lvert V\left(B\left(\frac{r}{2},r,p'\right) \vee P_{c'}\right) \right\rvert$ implies that $c' = \left(\frac{r}{2}-1\right)(p-p')+c$. Now by Lemma ~\ref{wedgevertandedge} and Lemma ~\ref{vertandedgebook}, \[\left\lvert E\left(B\left(\frac{r}{2},r,p'\right) \vee P_{c'}\right) \right\rvert = \frac{r}{2}(p'+1)+c' = p'-p+\frac{r}{2}(p+1)+c \]\[\leq \frac{r}{2}(p+1)+c = \left\lvert E\left(B\left(\frac{r}{2},r,p\right) \vee P_{c}\right) \right\rvert\] where the last inequality follows since $p' \leq p$.
\end{proof}

This gives us everything we need to bound the number of edges for a simple, connected, finite and planar graph $G$ with $r = \text{girth}(G) = \text{circum}(G)$. We first consider the case that $r$ is even.

\begin{lemma}
\label{girthevenprelim}
Let $G$ be a simple, connected, finite and planar graph with $n \geq 4$ vertices and $r = \text{girth}(G) = \text{circum}(G)$ where $r$ is even. Let \[p = \left\lfloor \frac{n-\frac{r}{2}-1}{\frac{r}{2}-1}\right\rfloor \text{ } and \text{ } c = n-2-\left(\frac{r}{2}-1\right)\left(p+1\right)\] Then \[|E(G)| \leq \left\lvert E\left(B\left(\frac{r}{2},r,p\right) \vee P_c\right)\right\rvert.\]
\end{lemma}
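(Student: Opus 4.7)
The plan is to induct on the number $m$ of generalised book graphs appearing in a wedge-sum decomposition of $G$, with Lemma~\ref{twobookgraph} powering the inductive step and Lemma~\ref{booktree} closing the base case. By Corollary~\ref{generalcharacterisation}, Construction~\ref{planargraphconstruction}, and Lemma~\ref{invariantvertexchoice}, I replace $G$ by a graph $G'$ with the same vertex and edge counts of the form
\[G' = \bigvee_{i=1}^{m} B\!\left(\tfrac{r}{2},r,p_i\right) \vee P_k,\]
where $m \geq 1$ (since $G$ contains a cycle), each $p_i \geq 1$ (treating $C_r$ as $B(\tfrac{r}{2},r,1)$), and $k \geq 0$; in particular $|V(G')| = n$.

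For the base case $m=1$, Lemmas~\ref{wedgevertandedge} and~\ref{vertandedgebook} give $n = (\tfrac{r}{2}-1)(p_1+1) + 2 + k$, so $p_1 \leq (n-\tfrac{r}{2}-1)/(\tfrac{r}{2}-1)$; since $p_1$ is an integer, $p_1 \leq p$. As the vertex counts of $B(\tfrac{r}{2},r,p_1) \vee P_k$ and $B(\tfrac{r}{2},r,p) \vee P_c$ both equal $n$, Lemma~\ref{booktree} yields the desired bound. For the inductive step I apply Lemma~\ref{twobookgraph} to the summands $B(\tfrac{r}{2},r,p_1)$ and $B(\tfrac{r}{2},r,p_2)$, replacing them by $B(\tfrac{r}{2},r,q) \vee P_{c_0}$ for appropriate $q \geq 1$ and $c_0 \geq 0$; this preserves the total vertex count and does not decrease the total edge count. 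Using Lemma~\ref{wedgevertandedge} I then merge $P_{c_0} \vee P_k$ into a single path $P_{c_0+k}$ without changing vertex or edge counts, producing a graph with $n$ vertices, at least $|E(G')|$ edges, and only $m-1$ book summands. The inductive hypothesis applied to this graph completes the step.

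The main delicate point is the bookkeeping at each reduction: Lemma~\ref{twobookgraph} takes a pair of books and returns a book plus a path, and the newly produced path must be merged cleanly with the existing path summand so that the decomposition remains in the exact shape the inductive hypothesis requires. This is routine via Lemma~\ref{wedgevertandedge}, but it is the conceptual glue that lets the induction proceed on the number of book summands alone and reduces everything to the single-book case handled by Lemma~\ref{booktree}.
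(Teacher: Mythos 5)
Your proposal is correct and follows essentially the same route as the paper: reduce $G$ to a wedge $\bigvee_i B(\tfrac{r}{2},r,p_i)\vee P_k$ via Corollary~\ref{generalcharacterisation}, Construction~\ref{planargraphconstruction} and Lemma~\ref{invariantvertexchoice}, collapse pairs of book summands with Lemma~\ref{twobookgraph} until one book remains, and finish the single-book case with Lemma~\ref{booktree}. The paper phrases the collapse as an iteration and handles the final case by checking $\left\lvert V\left(B\left(\tfrac{r}{2},r,p+1\right)\right)\right\rvert > n$ rather than deriving $p_1 \leq p$ directly as you do, but these are cosmetic differences.
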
 

\begin{proof}
First by Lemma ~\ref{vertandedgebook},\[\left\lvert V\left(B\left(\frac{r}{2},r,p\right)\right)\right\rvert = \left(\frac{r}{2}-1\right)(p+1)+2\leq \left(\frac{r}{2}-1\right)\left(\frac{n-\frac{r}{2}-1}{\frac{r}{2}-1}+1\right)+2 = n.\] Therefore by Lemma ~\ref{wedgevertandedge} and Lemma ~\ref{vertandedgebook}, $c \geq 0$, and so $P_c$ is well defined. Moreover, it follows that $\left\lvert V\left(B\left(\frac{r}{2},r,p\right) \vee P_c\right)\right\rvert = n$. Now, we show that the number of vertices in the graph $B\left(\frac{r}{2},r,p+1\right)$ is greater than $n$. In particular by Lemma ~\ref{booktree}, this implies that any other graph $H$ of the form $B \vee T$ where $B$ is a generalised book graph and $T$ is a tree with $n = |V(H)| = \left\lvert V\left(B\left(\frac{r}{2},r,p\right) \vee P_c\right)\right\rvert$ has \begin{equation}\label{bookwedgetreemax}|E(H)| \leq \left\lvert E\left(B\left(\frac{r}{2},r,p\right) \vee P_c\right)\right\rvert.\end{equation} By Lemma ~\ref{vertandedgebook},\[\left\lvert V\left(B\left(\frac{r}{2},r,p+1\right)\right)\right\rvert = \frac{r}{2}+1+\left(\left\lfloor \frac{n-\frac{r}{2}-1}{\frac{r}{2}-1}\right\rfloor+1\right)\left(\frac{r}{2}-1\right) \]\[> \frac{r}{2}+1+\left( \frac{n-\frac{r}{2}-1}{\frac{r}{2}-1}-1+1\right)\left(\frac{r}{2}-1\right) = n.\]

Since we are only considering the number of edges in $G$, by Lemma ~\ref{invariantvertexchoice} and Construction ~\ref{planargraphconstruction}, it suffices to consider the graph $G' = \bigvee_{i=1}^m \mathcal{F}_i \vee P_j$ where $\mathcal{F}_i$ are the strict face components of $G$ and $P_j$ is a path graph. Corollary ~\ref{generalcharacterisation} implies that $\mathcal{F}_i = B(\frac{r}{2},r,p_i)$ for $p_i \geq 1$, with the convention that $B(\frac{r}{2},r,1) = C_r$. 

Now consider $|E(G')| = \left\lvert E\left(\bigvee_{i=1}^m \mathcal{F}_i \vee P_k\right)\right\rvert$. Observe that by definition, the wedge sum is associative and commutative, up to isomorphism. Applying Lemma ~\ref{twobookgraph} to $\mathcal{F}_{m-1} \vee \mathcal{F}_{m}$, we obtain \[\left\lvert E\left(\bigvee\limits_{i=1}^m \mathcal{F}_i \vee P_k\right)\right\rvert \leq \left\lvert E\left(\bigvee\limits_{i=1}^{m-2} \mathcal{F}_i \vee B' \vee P_{c_1} \vee P_k\right)\right\rvert \] for some generalised book graph $B'$ and path graph $P_{c_1}$. Moreover, Lemma ~\ref{invariantvertexchoice} implies that $\left\lvert E\left(\bigvee_{i=1}^{m-2} \mathcal{F}_i \vee B' \vee P_{c_1} \vee P_k\right)\right\rvert = \left\lvert E\left(\bigvee_{i=1}^{m-2} \mathcal{F}_i \vee B' \vee P' \right)\right\rvert$ for some path graph $P'$. Iterate this process by considering the last strict face component in the wedge summand and $B'$ to obtain \[\left\lvert E\left(\bigvee\limits_{i=1}^m \mathcal{F}_i \vee P_k\right)\right\rvert \leq \left\lvert E(\overline{B} \vee \overline{P})\right\rvert\] for some generalised book graph $\overline{B}$ and path graph $\overline{P}$. However, by (\ref{bookwedgetreemax}) \[E(\overline{B} \vee \overline{P}) \leq \left\lvert E\left(B\left(\frac{r}{2},r,p\right) \vee P_c\right)\right\rvert.\]
\end{proof}

The case for $r$ odd is similar.

\begin{lemma}
\label{girthoddprelim}
Let $G$ be a simple, connected, finite and planar graph with $n \geq 3$ vertices and $r = \text{girth}(G) = \text{circum}(G)$ where $r$ is odd. Let \[p' = \left\lfloor \frac{n-1}{r-1}\right\rfloor.\] Then \[|E(G)| \leq \left\lvert E\left(\bigvee\limits_{i=1}^{p'} C_r \vee P_{n-1+p'(1-r)}\right)\right\rvert.\]
\end{lemma}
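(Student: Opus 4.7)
The plan is to reduce $G$ to a canonical wedge-sum form using the structural results already established, and then carry out a single-variable optimization. By Corollary~\ref{generalcharacterisation}, since $r$ is odd, every strict face component of $G$ must equal $C_r$. Combining Construction~\ref{planargraphconstruction} with Lemma~\ref{invariantvertexchoice} (and, as in the paragraph preceding Lemma~\ref{girthevenprelim}, amalgamating the tree pieces into a single path, which preserves vertex and edge counts), the number of edges of $G$ equals the number of edges of some graph $G' = \bigvee_{i=1}^{m} C_r \vee P_k$ with $m \geq 1$ and $k \geq 0$.

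Applying Lemma~\ref{wedgevertandedge} together with $|V(C_r)| = |E(C_r)| = r$ and $|V(P_k)| = k+1$, $|E(P_k)| = k$ yields
\[ n \;=\; m(r-1) + k + 1 \qquad \text{and} \qquad |E(G')| \;=\; mr + k. \]
Eliminating $k$ gives the key identity $|E(G')| = n - 1 + m$, so the edge count is strictly increasing in $m$. The maximum is therefore attained by choosing $m$ as large as possible subject to $k = n - 1 - m(r-1) \geq 0$, which forces $m \leq \lfloor (n-1)/(r-1) \rfloor = p'$.

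It then remains to check that the resulting bound $|E(G)| \leq n - 1 + p'$ matches the form in the statement. A direct count via Lemma~\ref{wedgevertandedge} gives
\[ \left\lvert E\left( \bigvee_{i=1}^{p'} C_r \vee P_{n-1+p'(1-r)} \right)\right\rvert \;=\; p' r + \bigl(n - 1 + p'(1-r)\bigr) \;=\; n - 1 + p', \]
closing the argument. The main point to be careful about is the structural reduction in the first paragraph — one has to verify that aggregating multiple trees into a single path, and ignoring the choice of base vertex at each wedge step, is genuinely harmless — but this is exactly what Construction~\ref{planargraphconstruction} and Lemma~\ref{invariantvertexchoice} provide. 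There is no real obstacle beyond this assembly; the odd case is lighter than the even case of Lemma~\ref{girthevenprelim} precisely because Corollary~\ref{generalcharacterisation} forces every strict face component to be an identical copy of $C_r$, so the interpolation machinery of Lemmas~\ref{twobookgraph} and~\ref{booktree} is not needed here.
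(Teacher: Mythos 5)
Your proof is correct and follows essentially the same route as the paper's (which is only a sketch): reduce via Corollary~\ref{generalcharacterisation}, Construction~\ref{planargraphconstruction} and Lemma~\ref{invariantvertexchoice} to a graph of the form $\bigvee_{i=1}^{m} C_r \vee P_k$, then maximise the number of cycle summands. Your explicit identity $|E(G')| = n-1+m$ neatly replaces the paper's appeal to an unproved analogue of Lemma~\ref{booktree}, so if anything your write-up is more complete.
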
 
\begin{proof}[Sketch]
It follows from Lemma ~\ref{wedgevertandedge}, \[n-1+p'(1-r) \geq 0 \text{ and } \left\lvert V\left(\bigvee_{i=1}^{p'} C_r \vee P_{n-1+p'(1-r)}\right)\right\rvert = n.\] Similar to Lemma ~\ref{booktree}, it can be shown that for graphs $\bigvee_{i=1}^{k} C_r \vee P_{l}$ and $\bigvee_{i=1}^{k'} C_r \vee P_{l'}$ with $k \geq k'$, $l \leq l'$ and $\left\lvert V\left(\bigvee_{i=1}^{k} C_r \vee P_{l}\right)\right\rvert = \left\lvert V\left(\bigvee_{i=1}^{k'} C_r \vee P_{l}\right)\right\rvert$, that \begin{equation}\label{cyclewedgepathstart}\left\lvert E\left(\bigvee_{i=1}^{k'} C_r \vee P_{l'}\right)\right\rvert \leq \left\lvert E\left(\bigvee_{i=1}^{k} C_r \vee P_{l}\right)\right\rvert.\end{equation}

By Lemma ~\ref{invariantvertexchoice} and Construction ~\ref{planargraphconstruction}, it suffices to consider the graph $G' = \bigvee_{i=1}^m \mathcal{F}_i \vee P_j$ where $\mathcal{F}_i$ are the strict face components of $G$ and $P_j$ is a path graph. Corollary ~\ref{generalcharacterisation} implies that $\mathcal{F}_i = C_r$. Therefore by (\ref{cyclewedgepathstart}), to maximise $E(G')$ we wish to maximise $m$ which is achieved by $m = p'$.
\end{proof}

Let $G$ be a simple, connected, finite and planar graph with $n$ vertices and $r = \text{girth}(G) = \text{circum}(G)$. Lemma ~\ref{girthevenprelim} and Lemma ~\ref{girthoddprelim} gives us a sharp upper bound for $|E(G)|$.

\begin{theorem}
\label{upperboundwithr}
Let $G$ be a simple, connected, finite and planar graph with $n$ vertices and $r = \text{girth}(G) = \text{circum}(G)$. Then if $r$ is even and $n \geq 4$, \[|E(G)| \leq n-1+\left\lfloor\frac{n-\frac{r}{2}-1}{\frac{r}{2}-1}\right\rfloor,\] and if $r$ is odd and $n \geq 3$, \[|E(G)| \leq n-1+\left\lfloor\frac{n-1}{r-1}\right\rfloor.\]
\end{theorem}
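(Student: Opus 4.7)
The plan is to observe that the theorem is a direct algebraic consequence of Lemma \ref{girthevenprelim} and Lemma \ref{girthoddprelim}: each of those lemmas already reduces bounding $|E(G)|$ to evaluating the edge count of a single, explicitly described wedge sum of a book graph (or a string of cycles) with a path graph. All that remains is to evaluate these edge counts using Lemma \ref{wedgevertandedge} and Lemma \ref{vertandedgebook} and check that the result simplifies to the stated expressions.

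First, suppose $r$ is even and $n \geq 4$. By Lemma \ref{girthevenprelim},
\[
|E(G)| \leq \left\lvert E\left(B\left(\tfrac{r}{2},r,p\right) \vee P_c\right)\right\rvert
\]
with $p = \lfloor (n - \tfrac{r}{2} - 1)/(\tfrac{r}{2} - 1)\rfloor$ and $c = n - 2 - (\tfrac{r}{2} - 1)(p + 1)$. I would then apply Lemma \ref{wedgevertandedge} and Lemma \ref{vertandedgebook} to write
\[
\left\lvert E\left(B\left(\tfrac{r}{2},r,p\right) \vee P_c\right)\right\rvert = \tfrac{r}{2}(p + 1) + c.
\]
Substituting the value of $c$ gives $\tfrac{r}{2}(p+1) + c = n - 1 + p$, which is exactly $n - 1 + \lfloor(n - \tfrac{r}{2} - 1)/(\tfrac{r}{2} - 1)\rfloor$. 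This is the required bound in the even case.

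Next, suppose $r$ is odd and $n \geq 3$, and set $p' = \lfloor(n-1)/(r-1)\rfloor$. Lemma \ref{girthoddprelim} gives
\[
|E(G)| \leq \left\lvert E\left(\bigvee\limits_{i=1}^{p'} C_r \vee P_{n-1+p'(1-r)}\right)\right\rvert.
\]
Applying Lemma \ref{wedgevertandedge}, together with $|E(C_r)| = r$ and $|E(P_l)| = l$, the right-hand side equals $p' r + (n - 1 + p'(1 - r)) = n - 1 + p'$, which is $n - 1 + \lfloor(n-1)/(r-1)\rfloor$, as claimed.

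There is no real obstacle here: the structural work has already been done in Lemma \ref{girthevenprelim} and Lemma \ref{girthoddprelim}, so the only task is the algebraic simplification of the constant $c$ in the even case (confirming that the subtracted term $(\tfrac{r}{2}-1)(p+1)$ recombines with $\tfrac{r}{2}(p+1)$ to produce the single extra summand $p$). I would present the computation as a short chain of equalities so that the reader can verify directly that the implicit bounds of the preceding lemmas collapse to the clean closed forms stated in the theorem.
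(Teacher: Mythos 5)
Your proposal is correct and follows essentially the same route as the paper: both invoke Lemma~\ref{girthevenprelim} and Lemma~\ref{girthoddprelim} to reduce to the explicit extremal graphs and then compute their edge counts via Lemma~\ref{wedgevertandedge} and Lemma~\ref{vertandedgebook}. Your version merely spells out the algebraic simplification ($\tfrac{r}{2}(p+1)+c = n-1+p$ and $p'r + n-1+p'(1-r) = n-1+p'$) that the paper leaves implicit, and both computations check out.
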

\begin{proof}
For $r$ even, Lemma ~\ref{girthevenprelim} shows that the graph $G' = B\left(\frac{r}{2},r,p\right) \vee P_c$ where $p = \left\lfloor \frac{n-\frac{r}{2}-1}{\frac{r}{2}-1}\right\rfloor$ and $c = n-2-\left(\frac{r}{2}-1\right)\left(p+1\right)$ has $|E(G)| \leq |E(G')|$ for all graphs $G$ with $n \geq 4$ vertices and $r = \text{girth}(G) = \text{circum}(G)$ for $r$ even. By Lemma ~\ref{wedgevertandedge} and Lemma ~\ref{vertandedgebook}, \[|E(G')| = n-1+\left\lfloor\frac{n-\frac{r}{2}-1}{\frac{r}{2}-1}\right\rfloor.\] 

For $r$ odd, Lemma ~\ref{girthoddprelim} shows that the graph $G' = \bigvee_{i=1}^{p'} C_r \vee P_{n-1+p'(1-r)}$ where $p' = \left\lfloor \frac{n-1}{r-1}\right\rfloor$ has $|E(G)| \leq |E(G')|$ for all graphs $G$ with $n \geq 3$ vertices and $r = \text{girth}(G) = \text{circum}(G)$ for $r$ odd. By Lemma ~\ref{wedgevertandedge} and Lemma ~\ref{vertandedgebook}, \[|E(G')| = n-1+\left\lfloor\frac{n-1}{r-1}\right\rfloor.\] 
\end{proof}

Theorem ~\ref{upperboundwithr} can be used to prove an upper bound that is independent of $r$.

\begin{corollary}
\label{upperboundwithoutr}
Let $G$ be a simple, connected, finite and planar graph with $n\geq 4$ vertices and $r = \text{girth}(G) = \text{circum}(G)$. Then \[|E(G)| \leq 2n-4.\]
\end{corollary}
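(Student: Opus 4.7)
The plan is to deduce Corollary~\ref{upperboundwithoutr} as a purely arithmetic consequence of the sharp bound already established in Theorem~\ref{upperboundwithr}. I would split the argument along the parity of $r$, apply the explicit formula in each case, and verify by elementary manipulation that the resulting expression is bounded above by $2n-4$.

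For the case $r$ even, note first that since $G$ contains a cycle of even length, $r \geq 4$, so writing $k = r/2$ we have $k \geq 2$. Theorem~\ref{upperboundwithr} gives
\[|E(G)| \leq n-1 + \left\lfloor \frac{n-k-1}{k-1}\right\rfloor,\]
so it suffices to prove $\lfloor (n-k-1)/(k-1)\rfloor \leq n-3$. Dropping the floor and clearing the positive denominator $k-1$, this reduces to
\[n - k - 1 \leq (n-3)(k-1) = nk - n - 3k + 3,\]
equivalently $(n-2)(k-2) \geq 0$, which holds since $n \geq 4$ and $k \geq 2$.

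For the case $r$ odd, we have $r \geq 3$ so $r-1 \geq 2$, and Theorem~\ref{upperboundwithr} yields
\[|E(G)| \leq n-1 + \left\lfloor \frac{n-1}{r-1}\right\rfloor \leq n-1 + \left\lfloor \frac{n-1}{2}\right\rfloor.\]
It remains to check $\lfloor (n-1)/2 \rfloor \leq n-3$ for $n \geq 4$, which is immediate: for $n=4$ both sides equal $1$, and for $n \geq 5$ the inequality $(n-1)/2 \leq n-3$ is equivalent to $n \geq 5$. Combining the two cases gives $|E(G)| \leq 2n-4$ in every situation covered by the corollary.

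There is no substantive obstacle here; the proof is essentially a pair of routine floor-function estimates. The only point that requires care is the edge case $n=4$ in the odd branch, where the inequality is tight, and confirming that the parity-based lower bound $r \geq 4$ (in the even case) is built into the hypothesis $\text{girth}(G) = \text{circum}(G)$ so that $k \geq 2$ is justified before invoking $(n-2)(k-2) \geq 0$.
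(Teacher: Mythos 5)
Your proof is correct, and it follows the same overall strategy as the paper: split on the parity of $r$ and deduce the bound arithmetically from Theorem~\ref{upperboundwithr}. The differences are in how the floor estimates are handled, and they are mildly to your advantage. In the even case the paper asserts without justification that $n-1+\left\lfloor\frac{n-r/2-1}{r/2-1}\right\rfloor$ is monotonically decreasing in $r$ and evaluates at $r=4$; you instead prove the needed inequality $\left\lfloor\frac{n-k-1}{k-1}\right\rfloor \leq n-3$ uniformly for all $k\geq 2$ via the factorisation $(n-2)(k-2)\geq 0$, which makes the monotonicity claim unnecessary. In the odd case the paper drops the floor too early, obtaining $n-1+\left\lfloor\frac{n-1}{2}\right\rfloor \leq \tfrac{3}{2}n-\tfrac{3}{2}$, which exceeds $2n-4$ at $n=4$; it then has to fall back on the structural characterisation (Corollary~\ref{generalcharacterisation}) to check $n=4$, $r$ odd separately. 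You keep the floor and observe that $\left\lfloor\frac{n-1}{2}\right\rfloor \leq n-3$ already holds (with equality) at $n=4$, so your argument is purely arithmetic and needs no appeal to the structure theory beyond Theorem~\ref{upperboundwithr} itself. Both proofs are valid; yours is a little cleaner at the edge case.
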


\begin{proof}
By Theorem ~\ref{upperboundwithr}, if $r$ is even, then $|E(G)| \leq n-1+\left\lfloor\frac{n-\frac{r}{2}-1}{\frac{r}{2}-1}\right\rfloor$. This is monotonically decreasing for $r \geq 4$ and so \[|E(G)| \leq n-1+\left\lfloor\frac{n-\frac{4}{2}-1}{\frac{4}{2}-1}\right\rfloor = 2n-4.\] If $r$ is odd, then by Theorem ~\ref{upperboundwithr}, $|E(G)| \leq n-1+\left\lfloor\frac{n-1}{r-1}\right\rfloor$. This is monotonically decreasing for $r \geq 3$ and so \[|E(G)| \leq n-1+\left\lfloor\frac{n-1}{2}\right\rfloor.\] 

Consider $n-1+\left\lfloor\frac{n-1}{2}\right\rfloor$. By definition of floor, $n-1+\left\lfloor\frac{n-1}{2}\right\rfloor \leq n-1+\frac{n-1}{2} = \frac{3}{2}n-\frac{3}{2}$. For all $n \geq 5$, $\frac{3}{2}n-\frac{3}{2} \leq 2n-4$ and so the bound holds for all $r \geq 3$ and $n \geq 5$. For $n=4$ and $r$ odd, by Corollary ~\ref{generalcharacterisation}, the only possibility for $G$ is $G = C_3 \vee P_2$ which has $4 \leq 2\cdot 4-4 = 4$ edges. Therefore, the bound holds for all $n \geq 4$ and $r \geq 3$.
\end{proof}

Theorem ~\ref{upperboundwithr} and Corollary ~\ref{upperboundwithoutr} can be used to prove the existence of two cycles of different lengths in a planar graph.

\begin{example}
Let $G$ be a simple, connected, finite and planar graph with 16 vertices and 29 edges. Suppose all the cycles in $G$ are of the same length, in other words, $\text{girth}(G) = \text{circum}(G)$. Then by Corollary ~\ref{upperboundwithoutr}, $|E(G)| \leq 2\cdot 16 -4 = 28$ which is a contradiction. Therefore, $G$ must contain two cycles of different lengths.
\end{example}

\begin{example}
Let $G$ be a simple, connected, finite and planar graph with 16 vertices, 22 edges and a cycle of length 6. Suppose all the cycles in $G$ are of the same length, in other words, $6=\text{girth}(G) = \text{circum}(G)$. Then by Corollary ~\ref{upperboundwithr}, $|E(G)| \leq 16-1+\left\lfloor \frac{16-3-1}{3-1}\right\rfloor = 21$ which is a contradiction. Therefore, $G$ must contain two cycles of different lengths.
\end{example}

\end{document}